\newtheorem{theorem}{Theorem}[section]
\newtheorem{lemma}[theorem]{Lemma}
\newtheorem{proposition}[theorem]{Proposition}
\newtheorem{corollary}[theorem]{Corollary}
\newtheorem{remark}[theorem]{Remark}
\newcommand{\tn}{|\mspace{-1mu}|\mspace{-1mu}|}
\numberwithin{equation}{section}
\begin{document}

\title{A stabilized nonconforming finite element method for the elliptic Cauchy problem}



\author{Erik Burman \\Department of Mathematics \\University College
  London}




\maketitle

\begin{abstract}
In this paper we propose a nonconforming finite element method for the
solution of the ill-posed elliptic Cauchy problem. We prove
error estimates using continuous dependence estimates in the
$L^2$-norm. The effect of
perturbations in data on the estimates is investigated.
The recently derived framework from \cite{Bu13,Bu14} is extended to
include the
case of nonconforming approximation spaces and we show that the use of
such spaces allows us to reduce the amount of stabilization necessary
for convergence, even in the case of ill-posed problems. 
\end{abstract}
 
\section{Introduction}\label{sec:intro}
We consider the Cauchy problem for Poisson's equation in a bounded
domain. This problem is known to be severely
ill-posed in the sense of Hadamard \cite{Hada02,Belg07,ARRV09}. The ill-posedness makes numerical
approximation challenging and different regularization methods have
been proposed, such as Tikhonov regularization \cite{TA77} or the quasi
reversibility method introduced by Latt\`es and Lions \cite{LL69}. 

Various finite element approaches for the solution of the elliptic
Cauchy problem have been suggested in the litterature. Some are based
on standard Galerkin formulations, but rely on structured
meshes or a special form of the continuous problem for stability
\cite{FM86,Lu95,RHD99}. Some use the above mentioned regularization techniques to
ensure stability \cite{ABB06,ABF06,Bour05, Bour06, DHH13} a related
approach is to recast the problem as a minimization problem
\cite{CN06,HHKC07,HLT11},
possibly with regularization.

 The objective of the present work is to draw on the ideas of
 \cite{Bu13,Bu14} and propose a consistent stabilization of a
 non-conforming finite element method.
The upshot is that the use of nonconforming elements
allows us to use a standard stabilization operators known from previous works on
well-posed problems \cite{HL02, HL03, BH05} for stability. Indeed we
only need to apply a penalty on the jump of the
approximate solution over element faces. The structure of the method bears some ressemblance
to that introduced in \cite{Bour05}, but the stabilizing terms in our
case are consistent for exact solutions in $H^1(\Omega)$. The key observation here is that the functional to be
minimized has on effect only on the discrete space, indeed it is zero
for any function in $H^1$.  The associated Euler-Lagrange equations
result in a consistent stabilized finite element method.

The fact that the stabilization is consistent allows us to derive
error estimates using discrete stability and the continuous dependence
on data of the partial differential equation. We follow an approach
similar to that suggested in \cite{Bu14}, but in this case an inf-sup
condition is necessary for the discrete stability. The error bound
is on a posteriori form, using a residual quantity together with the
continuous dependence. Thanks to the primal/adjoint stabilization the
residual terms can be shown to be optimally convergent independent of
the stability of the underlying problem, for sufficiently smooth
solutions. 

We also show how perturbed data can be introduced in the analysis and discuss how a
posteriori control of the mesh refinement may include the effect of
perturbations, provided their magnitude is known.

The problem that we are interested in takes the form:
find $u:\Omega\mapsto \mathbb{R}$ such that
\begin{equation}
\label{eq:Pb}
\left\{
\begin{array}{rcl}
-\Delta  u & = &  f, \mbox{ in } \Omega\\
u  &=&  0 \mbox{ on } \Gamma_D\\
\nabla u \cdot n &=&\psi \mbox{ on } \Gamma_C
\end{array}
\right. 
\end{equation}
where $\Omega \subset \mathbb{R}^d$, $d=2,3$ is a convex polyhedral (polygonal)
domain and $\Gamma_C,\, \Gamma_D$ denote simply connected parts of the
boundary $\partial \Omega$, such that $\Gamma_C \subset \Gamma_D \subset \partial \Omega$. We denote the
complement to the Dirichlet boundary $\Gamma_D' := \partial \Omega
\setminus \Gamma_D$ and the complement of the Cauchy boundary
$\Gamma_C' := \partial \Omega
\setminus \Gamma_C$. For simplicity we will assume that both
$\Gamma_C$, $\Gamma_D$ and $\Gamma_D'$ have strictly positive
$(d-1)$-measure. The practical interest of \eqref{eq:Pb} stems from
engineering problems where the exact
boundary condition is unknown on part of the boundary, but additional
measurements $\psi$ of the fluxes are available on the accessible
boundary $\Gamma_D$. It is then reasonable to assume that there exists
a unique solution with a certain regularity and then prove convergence
of the numerical method under these assumptions. This is the approach
we will take below. To this end we assume that $f \in L^2(\Omega)$, $\psi \in H^{\frac12}(\Gamma)$ and
that a unique $u \in H^2(\Omega)$ satisfies \eqref{eq:Pb}.

For the derivation of a weak formulation we introduce the spaces
$
V:= \{v \in H^1(\Omega): v\vert_{\Gamma_D} = 0 \}$ and
$W:= \{v \in H^1(\Omega): v\vert_{\Gamma_C'} = 0 \},
$ both equipped with the $H^1$-norm and with dual spaces denoted by $V'$
and $W'$.

Using these spaces we obtain a weak formulation: find $u \in V$ such that
\begin{equation}\label{abstract_prob}
a(u,w) = l(w)\quad \forall w \in W,
\end{equation}
where
$$a(u,w) = \int_\Omega \nabla u\cdot \nabla w ~\mbox{d}x,$$ and $$l(w) := 
\int_\Omega f w ~\mbox{d}x + \int_{\Gamma_C} \psi \, w ~\mbox{d}s.
$$
We will use the notation $(\cdot,\cdot)_{X}$ for the $L^2$-scalar product over
$X$ and for the associated norm we write $\|x\|_X :=
(x,x)_{X}^{\frac12}$. The $H^s$-norm will be denoted by
$\|\cdot\|_{H^s(\Omega)}$ and we identify the norms on $V$ and $W$
with the $H^1$-norm, $\|\cdot \|_V = \|\cdot \|_W =
\|\cdot\|_{H^1(\Omega)}$. Observe that we may not assume that the
problem is well-posed for general $l(\cdot) \in W'$. Indeed since $u
\not \in W$ coercivity fails and inf-sup stability does not hold
either in general \cite{Belg07}. 
\subsection{Continuous dependence on data}\label{continuous_dep}
The problem \eqref{eq:Pb} is ill-posed and for our analysis we will only use a
continuous dependence result linking the size of some functional to
the solution to the size of data. Consider the functional $j:V \mapsto
\mathbb{R}$. Let $\Xi:\mathbb{R}^+\mapsto \mathbb{R}^+$ be a continuous, monotone increasing
function with $\lim_{x \rightarrow 0^+}\Xi(x) = 0$. The continuous
dependence that we will assume then takes the form.
\begin{equation}\label{cont_dep_assump}
\mbox{If for $\epsilon > 0$, there holds
$
\|l\|_{W'} \leq \epsilon 
$
in \eqref{abstract_prob} then, for $\epsilon$ small enough,
$
|j(u)| \leq \Xi(\epsilon).
$}
\end{equation}
It is known \cite[Theorems 1.7 and 1.9]{ARRV09} that if there exists a solution $u \in
H^1(\Omega)$, with $E:=\|u\|_{H^1(\Omega)}$ to \eqref{eq:Pb}, a continuous dependence of the form \eqref{cont_dep_assump}
holds for $0<\epsilon<1$ and 
\begin{equation}\label{stab_1}
\begin{array}{l}
\mbox{$j(u):=
\|u\|_{L^2(\omega)}$, $\omega \subset \Omega:\,
\mbox{dist}(\omega, \partial \Omega) =: d_{\omega,\partial \Omega}>0$}\\[3mm]
\mbox{
with $\Xi(x) = C(E) x^\varsigma$, $C(E)>0$, $\varsigma:=
\varsigma(d_{\omega,\partial \Omega}) \in (0,1)$}
\end{array}
\end{equation}
 and for \begin{equation}\label{stab_2}
\begin{array}{l}
\mbox{$j(u):= \|u\|_{L^2(\Omega)}$ with $\Xi(x) = C_1(E)
(|\log(x)| + C_2(E))^{-\varsigma}$}\\[3mm]
\mbox{ with $C_1(E), C_2(E)>0$, $\varsigma \in
(0,1)$.}
\end{array}
\end{equation}
The constants above also depend on the geometry of the
problem. Note that to derive these results $l(\cdot)$ is first associated
with its Riesz representant in $W$ (c.f. \cite[equation
(1.31)]{ARRV09} and discussion.)

\section{The nonconforming stabilized method}\label{sec_quasi}
Let $\{\mathcal{T}_h\}_h$ denote a family of shape regular and quasi uniform
tesselations of $\Omega$ into nonoverlapping simplices, such that for
any two different simplices $\kappa$, $\kappa' \in \mathcal{T}_h$, $\kappa \cap
\kappa'$ consists of either the empty set, a common face or a common
vertex. The diameter of a simplex $\kappa$ will be denoted
$h_{\kappa}$ and the outward pointing normal $n_{\kappa}$. The family
$\{\mathcal{T}_h\}_h$ is indexed by the maximum element-size of
$\mathcal{T}_h$, $h :=
\max_{\kappa \in \mathcal{T}_h} h_\kappa$. We denote the set of element faces in $\mathcal{T}_h$ by
$\mathcal{F}$ and let $\mathcal{F}_i$ denote the set of interior faces and $\mathcal{F}_{\Gamma}$ the set of faces in some
$\Gamma \subset \partial \Omega$. To each face $F \in \mathcal{F}$ we
associate the mesh parameter $h_F := \mbox{diam}(F)$. We will assume that the mesh is
fitted to the subsets of $\partial \Omega$ representing the boundary
conditions $\Gamma_D$ and $\Gamma_C$, so that the
boundaries of these subsets coincide with element boundaries. 
To each face
$F$ we associate a unit normal vector, $n_F$. For interior faces its orientation is arbitrary, but fixed. On the boundary
$\partial \Omega$ we identify $n_F$ with the outward pointing normal
of $\Omega$.
We define the jump over interior faces $F \in
\mathcal{F}_i$ by $[v]\vert_F:= \lim_{\epsilon \rightarrow
  0^+} (v(x\vert_F- \epsilon n_F) - v(x\vert_F+ \epsilon n_F))$
and for faces on the boundary, $F \in \partial \Omega$, we let
$[v]\vert_F := v \vert_F$. Similarly we define the average of a function over
an interior
face $F$ by $\{v \}\vert_F := \tfrac12 \lim_{\epsilon \rightarrow
  0^+} (v(x\vert_F- \epsilon n_F) + v(x\vert_F+ \epsilon n_F))$ and
for $F$ on the boundary we define $\{v \}\vert_F := v \vert_F$.
The classical nonconforming space of piecewise affine
finite element functions (see \cite{CR73}) then reads
$$
X_h^{\Gamma} := \{v_h \in L^2(\Omega): \int_{F} [v_h]~
\mbox{d}s = 0,\, \forall F \in \mathcal{F}_i \cup \mathcal{F}_{\Gamma}
\mbox{ and } v_h\vert_{\kappa} \in \mathbb{P}_1(\kappa),\,
\forall \kappa \in \mathcal{T}_h \}
$$
where $\mathbb{P}_1(\kappa)$ denotes the set of polynomials of degree less than
or equal to one restricted to the element $\kappa$ and $\Gamma$
denotes some portion of the boundary $\partial \Omega$ consisting of a
union of a subset of boundary element faces.
We may then define the spaces $V_h := X_h^{\Gamma_D}$ and $W_h :=
X_h^{\Gamma_C'}$. We recall the interpolation operator $r_h: H^1(\Omega)
\rightarrow X_h^{\Gamma}$ defined by the relation 
\[
\overline{\{r_h v\}}\vert_F := |F|^{-1} \int_F \{r_h v\}~\mbox{d}s
=  |F|^{-1}  \int_F  u ~\mbox{d}s 
\]
for every $F \in \mathcal{F}$ and with $|F|$ denoting the $(d-1)$-measure
of $F$.
It is conventient to introduce the broken
norms 
\[
\|x\|_h^2 := \sum_{\kappa \in \mathcal{T}_h} \|x\|_{\kappa}^2 \mbox{
  and } \|x\|_{1,h}^2 := \|x\|_h^2+ \|\nabla x\|_h^2.
\]
The following inverse and trace inequalities are well known
\begin{equation}\label{trace}
\begin{array}{rcl}
\|v\|_{\partial \kappa} \leq C_t (h_{\kappa}^{-\frac12} \|v\|_{\kappa} +
h_{\kappa}^{\frac12} \|\nabla v\|_{\kappa}), \forall v \in H^1(\kappa) \\[3mm]
h_{\kappa}\|\nabla v_h\|_{\kappa} + h_\kappa^{\frac12} \|v_h\|_{\partial \kappa}
\leq C_i \|v_h\|_\kappa, \quad \forall v_h \in X_h^\Gamma.
\end{array}
\end{equation}
Using the inequalities of \eqref{trace} and standard approximation
results from \cite{CR73} it is straightforward to show the following
approximation results of the interpolant $r_h$
\begin{equation}\label{eq:approx}
\begin{array}{rcl}
\|u - r_h u\|    +h \|\nabla (u - r_h u)\|_h &\leq &C h^{t} |u|_{H^t(\Omega)}\\[3mm]
 \|h^{-\frac12} ( u - r_h
u) \|_{\mathcal{F}} + \|h^{\frac12} \nabla (u - r_h u) \cdot n_F
\|_{\mathcal{F}} &\leq &C h^{t-1} |u|_{H^t(\Omega)}
\end{array}
\end{equation}
where $t \in \{1,2\}$. It will also be useful to bound the $L^2$-norm
of the interpolant $r_h$ by its values on the element faces. To this
end we prove a technical lemma.
\begin{lemma}\label{lem:invtrace}
For any function $v_h \in X_h^\Gamma$ there holds
\[
\|h^{-1} v_h\|_\Omega \leq c_{\mathcal{T}}\left( \sum_{F \in \mathcal{F}} h^{-1}_F \|\overline{
  \{v_h\}}\|^2_F \right)^{\frac12}
\]
\end{lemma}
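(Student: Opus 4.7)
The plan is to reduce the estimate to a local, element-wise bound and then sum. On a fixed reference simplex $\hat \kappa$ the space $\mathbb{P}_1(\hat \kappa)$ has dimension $d+1$, and the Crouzeix--Raviart degrees of freedom, i.e.\ the $d+1$ face averages, form a unisolvent set of linear functionals on it. Hence by equivalence of norms on this finite-dimensional space one has
\[
\|\hat v\|_{\hat \kappa}^2 \leq \hat C \sum_{\hat F \subset \partial \hat \kappa} |\overline{\hat v}_{\hat F}|^2
\qquad \forall \hat v \in \mathbb{P}_1(\hat \kappa),
\]
with a constant $\hat C$ depending only on the reference element.

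Next I would transfer this inequality to an arbitrary simplex $\kappa$ via the affine map $\hat \kappa \to \kappa$. Since face averages are affine invariants, one finds $\|v_h\|_\kappa^2 \leq C h_\kappa^d \sum_{F \subset \partial \kappa} |\overline{\{v_h\}}_F|^2$. Using $|F|\sim h_F^{d-1}$ to convert the pointwise value of the face average into its $L^2(F)$ norm gives $|\overline{\{v_h\}}_F|^2 \sim h_F^{-(d-1)} \|\overline{\{v_h\}}\|_F^2$, so that, invoking the shape regularity and quasi-uniformity $h_F \sim h_\kappa$, one obtains
\[
h_\kappa^{-2}\|v_h\|_\kappa^2 \leq C \sum_{F\subset \partial \kappa} h_F^{-1}\|\overline{\{v_h\}}\|_F^2.
\]

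Summing over $\kappa \in \mathcal{T}_h$ and observing that each interior face belongs to exactly two elements (while each boundary face belongs to exactly one) completes the proof, the factor of two being absorbed into the constant $c_{\mathcal T}$. For faces $F \in \mathcal{F}_\Gamma$ the term $\overline{\{v_h\}}_F$ simply vanishes by definition of $X_h^\Gamma$, so its inclusion in the sum on the right-hand side is harmless.

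The only real obstacle is bookkeeping the powers of $h$ in the passage from the reference configuration to the physical element and in converting the scalar face average to its $L^2(F)$-norm; once these scalings are done correctly the result is an immediate consequence of the Crouzeix--Raviart unisolvence together with quasi-uniformity of the mesh.
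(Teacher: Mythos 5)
Your proof is correct and follows essentially the same route as the paper: norm equivalence on the reference element via unisolvence of the Crouzeix--Raviart face-average degrees of freedom, an affine scaling argument under shape regularity to obtain the local bound $\|v_h\|_\kappa^2 \leq C\sum_{F\subset\partial\kappa} h_F\|\overline{\{v_h\}}\|_F^2$, and summation over the elements. You merely spell out the $h$-bookkeeping that the paper leaves implicit.
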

\begin{proof}
It follows by norm equivalence of discrete
spaces on the reference element and a scaling argument (under the
assumption of shape regularity) that for all
$\kappa \in \mathcal{T}_h$
\begin{equation}\label{eq:inv_trace}
 \|v_h\|^2_{\kappa} \leq C \sum_{\substack{F \in \partial \kappa }}
h_F \|\overline{
  v}_h\|_F^2.
\end{equation}
The claim follows by shape regularity and by summing over the elements of $\mathcal{T}_h$ and
recalling that $\|\overline{v}_h\|_F^2 = \|\overline{\{v_h\}}\|_F^2 $.
\end{proof}
Following \cite{Bour05, Bu13} the formulation may now be written:
 find $(u_h,z_h) \in V_h \times W_h$ such that,
\begin{equation}\label{stabFEM}
\begin{array}{rcl}
a_h(u_h,w_h) - s_W(z_h,w_h) &=&l(w_h)\\[3mm]
a_h(v_h,z_h) + s_V(u_h,v_h)  &=&0
\end{array} 
\end{equation}
for all $(v_h,w_h) \in V_h \times W_h$.
Here the bilinear forms are defined by
\[
a_h(u_h,w_h) = \sum_{\kappa \in \mathcal{T}_h} \int_\kappa \nabla u_h\cdot \nabla w_h ~\mbox{d}x,
\]
\begin{equation}\label{stab_LS}
s_W(z_h,w_h)  := \sum_{\kappa \in \mathcal{T}_h} \int_\kappa \gamma_W
\nabla z_h\cdot \nabla w_h~\mbox{d}x
\end{equation}
or 
\begin{equation}\label{eq:stab_dual_consist}
s_W(z_h,w_h)  := \sum_{F \in \mathcal{F}_i \cup \mathcal{F}_{\Gamma_C'}} \int_{F} \gamma_W
h_F^{-1} [z_h][w_h] ~\mbox{d}s
\end{equation}
and finally
\begin{equation}\label{stab_primal}
s_V(u_h,v_h)  := \sum_{F \in \mathcal{F}_i \cup \mathcal{F}_{\Gamma_D}} \int_{F} \gamma_V
h_F^{-1} [u_h][v_h] ~\mbox{d}s.
\end{equation}
We also propose the compact form: find $(u_h,z_h) \in \mathcal{V}_h :=
V_h \times W_h$ such that,
\[
A_h[(u_h,z_h),(v_h,w_h)] = l(w_h)
\]
for all $(v_h,w_h) \in \mathcal{V}_h$. The bilinear form is then
given by
\[
A_h[(u_h,z_h),(v_h,w_h)] := a_h(u_h,w_h) - s_W(z_h,w_h) + a_h(v_h,z_h) + s_V(u_h,v_h) .
\]
Observe that for \eqref{stab_LS}, by Poincar\'e's inequality there exists $c_1,c_2>0$ so that 
\[
c_1 \gamma_W^{\frac12}  \|w_h\|_{1,h} \leq  s_W(w_h,w_h)^{\frac12}
\leq c_2 \gamma_W^{\frac12} \|w_h\|_{1,h}, \forall w_h \in W_h.
\]
This norm equivalence is important for stability when there are
perturbations in data (see Lemma
\ref{lem:pert_bound}). For the weaker adjoint stabilization
\eqref{eq:stab_dual_consist} only the upper bound holds. For the first part of the analysis (sections
\ref{sec:stability}-\ref{sec:error}) the stability obtained by \eqref{eq:stab_dual_consist}
is sufficient and the analysis is identical. In Section
\ref{sec:perturbed_data} where perturbed data are considered the two
approaches lead to slightly different estimates. This operator has the advantage of being adjoint
consistent, but since duality arguments are not used herein this has
no impact on the results presented below. The stabilization
\eqref{stab_LS} will be considered in the analysis, but we will
outline in remarks how the arguments change if
\eqref{eq:stab_dual_consist} is used. We will then compare the behavior
of the two operators numerically.

We end this section by proving two technical Lemmas that will be
useful in the analysis.
Using the regularity assumptions on the data in $l(w)$
it is straightforward to show that the formulation satisfies the
following weak consistency
\begin{lemma}\label{lem:weakcons}(Weak consistency)
Let $u$ be the solution of \eqref{eq:Pb}, with $f \in L^2(\Omega)$ and
$\psi \in L^2(\Gamma_{C})$ and let $(u_h,z_h) \in \mathcal{V}_h$ be
the solution of \eqref{stabFEM} then,  for all $w_h \in W_h$, there holds,
\begin{equation}\label{galortho}
|a_h(u_h - u,w_h) - s_W(z_h,w_h)|
\leq \sum_{F \in \mathcal{F}_i \cup  \mathcal{F}_{\Gamma_C'}} \inf_{\nu_h \in V_h}\int_{F}
|(\nabla u - \{\nabla \nu_h\})\cdot n_F || [w_h]| ~\mbox{d}s.
\end{equation}
\end{lemma}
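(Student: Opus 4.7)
The plan is to start from the first equation of \eqref{stabFEM}, namely $a_h(u_h,w_h) - s_W(z_h,w_h) = l(w_h)$, and subtract $a_h(u,w_h)$ from both sides. The identity we must establish is therefore
\[
a_h(u_h-u,w_h) - s_W(z_h,w_h) = l(w_h) - a_h(u,w_h),
\]
and the work reduces to rewriting the right-hand side as a sum of face integrals against the jump of $w_h$.

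Since $u \in H^2(\Omega)$, I would apply integration by parts element by element to obtain
\[
a_h(u,w_h) = \sum_{\kappa \in \mathcal{T}_h} \int_\kappa (-\Delta u)\, w_h\,\mathrm{d}x + \sum_{F \in \mathcal{F}} \int_F (\nabla u \cdot n_F)\,[w_h]\,\mathrm{d}s,
\]
using that $\nabla u \cdot n$ is single-valued across interior faces (so the usual double-counting on neighboring elements collapses to $\nabla u \cdot n_F [w_h]$). Substituting $-\Delta u = f$ on each $\kappa$ and $\nabla u \cdot n = \psi$ on boundary faces $F \in \mathcal{F}_{\Gamma_C}$ (where $[w_h] = w_h$) exactly cancels the volume source and the Neumann contribution in $l(w_h)$. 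What remains is
\[
l(w_h) - a_h(u,w_h) = -\sum_{F \in \mathcal{F}_i \cup \mathcal{F}_{\Gamma_C'}} \int_F (\nabla u \cdot n_F)\,[w_h]\,\mathrm{d}s.
\]

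To introduce the approximation $\nu_h$, I would use the defining property of $W_h = X_h^{\Gamma_C'}$: for each $F \in \mathcal{F}_i \cup \mathcal{F}_{\Gamma_C'}$ we have $\int_F [w_h]\,\mathrm{d}s = 0$. Since any $\nu_h \in V_h$ is piecewise affine, the quantity $\{\nabla \nu_h\}\cdot n_F$ is constant on each face, hence $\int_F (\{\nabla \nu_h\}\cdot n_F)[w_h]\,\mathrm{d}s = 0$. This lets me replace $\nabla u \cdot n_F$ by $(\nabla u - \{\nabla \nu_h\})\cdot n_F$ inside every face integral, for a choice of $\nu_h$ that may be taken face by face.

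Finally, taking absolute values inside the face sum and then passing to the infimum over $\nu_h$ on each face independently produces exactly the bound claimed. The only delicate point is the correct bookkeeping of the boundary face contributions: one must verify that the $\mathcal{F}_{\Gamma_C}$ faces are absorbed by the $\psi$ term of $l$, and that the $\mathcal{F}_{\Gamma_C'}$ faces are precisely those where the zero-mean property of $w_h$ is available—this is the only step where the specific definitions of $V_h$ and $W_h$ enter nontrivially, and so it is the main (if modest) obstacle.
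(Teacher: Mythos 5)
Your argument is correct and follows essentially the same route as the paper: elementwise integration by parts of $a_h(u,w_h)$, cancellation of the volume and $\Gamma_C$ contributions against $l(w_h)$, insertion of $\{\nabla \nu_h\}\cdot n_F$ via the zero-mean property of $[w_h]$ on $\mathcal{F}_i \cup \mathcal{F}_{\Gamma_C'}$ together with the face-wise constancy of $\{\nabla\nu_h\}\cdot n_F$, and finally absolute values and the face-by-face infimum. The only difference is presentational (you integrate by parts after subtracting the scheme rather than before, and pass directly to the face-jump form), so nothing further is needed.
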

\begin{proof}
Multiplying \eqref{eq:Pb} with $w_h \in W_h$ and integrating by parts we have
\begin{multline}
\int_\Omega f w_h ~\mbox{d}x =-\int_\Omega \Delta u w_h ~\mbox{d}x \\
= -\sum_{\kappa \in \mathcal{T}_h}
\sum_{\substack{F \in \partial \kappa \\ F \not \in \Gamma_C}} \int_F \nabla u \cdot n_{\kappa} w_h ~
\mbox{d}s + a_h(u,w_h) - \int_{\Gamma_C} \psi w_h ~\mbox{d}s
\end{multline}
or by rearranging terms
\[
a_h(u,w_h) = l(w_h) + \sum_{\kappa \in \mathcal{T}_h}
\sum_{\substack{F \in \partial \kappa \\ F \not \in \Gamma_C}} \int_F \nabla u \cdot n_{\kappa} w_h ~
\mbox{d}s.
\]
Using \eqref{stabFEM} we obtain
\[
a_h(u_h - u,w_h) - s(z_h,w_h)
= -\sum_{\kappa \in \mathcal{T}_h}
\sum_{\substack{F \in \partial \kappa \\ F \not \in \Gamma_C}} \int_F \nabla u \cdot n_{\kappa} w_h ~
\mbox{d}s.
\]
By the definition of the finite element space $W_h$ on $\Gamma_C'$ and since every internal face appears twice with different orientation of
$n_{\kappa}$ we have for all $\nu_h \in V_h$,
\[
\sum_{\substack{F \in \partial \kappa \\ F \not \in \Gamma_C}} \int_F \nabla u \cdot n_{\kappa} w_h ~
\mbox{d}s = \sum_{\substack{F \in \partial \kappa \\ F \not \in
    \Gamma_C}} \int_F(\nabla u - \{\nabla \nu_h\})\cdot n_\kappa~  w_h ~
\mbox{d}s. 
\]
We now observe that by replacing $w_h$ with the jump $[w_h]$ we may write the sum over the
 faces of the mesh, replacing $n_{\kappa}$ by $n_F$. The conclusion follows by
 taking absolute values on both sides and moving the absolute values
 under the intergral sign creating the desired inequality.
\end{proof}
\begin{lemma}\label{lem:rhH1proj}
For any $v \in H^1(\Omega)$ and for all $w_h \in W_h$ there holds
\[
a(v - r_h v, w_h) = 0.
\]
\end{lemma}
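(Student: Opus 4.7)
The plan is to exploit two structural facts: that $w_h$ is piecewise affine (so $\Delta w_h = 0$ element-wise and $\nabla w_h \cdot n_F$ is constant on each face), and that the Crouzeix--Raviart interpolant $r_h v$ has matching face averages with $v$ on every face of the mesh. Since $r_h v$ is not in $H^1(\Omega)$, I would first interpret $a(v - r_h v, w_h)$ in the broken sense, i.e.\ as $a_h(v - r_h v, w_h) = \sum_{\kappa \in \mathcal{T}_h} \int_\kappa \nabla(v - r_h v) \cdot \nabla w_h \, dx$, which coincides with $a(\cdot,\cdot)$ when the first argument lives in $H^1(\Omega)$.

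The main step is element-wise integration by parts. On each $\kappa \in \mathcal{T}_h$,
\[
\int_\kappa \nabla(v - r_h v) \cdot \nabla w_h \, dx = \int_{\partial \kappa} (v - r_h v) (\nabla w_h \cdot n_\kappa) \, ds - \int_\kappa (v - r_h v) \Delta w_h \, dx.
\]
The volume term vanishes because $w_h|_\kappa \in \mathbb{P}_1(\kappa)$. On each face $F \subset \partial \kappa$, the quantity $\nabla w_h|_\kappa \cdot n_\kappa$ is a constant $c_{F,\kappa}$, so the boundary term collapses to $\sum_{F \subset \partial \kappa} c_{F,\kappa} \int_F (v - r_h v|_\kappa) \, ds$.

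To finish, I would invoke the defining property of $r_h$: combining the mean condition $|F|^{-1}\int_F \{r_h v\} \, ds = |F|^{-1}\int_F v \, ds$ with the zero-jump-integral condition $\int_F [r_h v] \, ds = 0$ (valid on every $F \in \mathcal{F}_i \cup \mathcal{F}_{\Gamma_C'}$, and trivially the mean condition on the remaining boundary faces) gives $\int_F r_h v|_\kappa \, ds = \int_F v \, ds$ for every element face. Hence every face contribution above vanishes, so summing over $\kappa$ yields $a(v - r_h v, w_h) = 0$.

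The only mildly subtle point is handling boundary faces on $\Gamma_C$: there $w_h$ need not vanish and its jump integral need not vanish either, so one must rely on the face-average property of $r_h$ rather than on any vanishing of $w_h$. Everything else is a routine consequence of linearity of $w_h$ on each simplex and the Crouzeix--Raviart interpolation identity.
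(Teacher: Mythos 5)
Your proof is correct and follows essentially the same route as the paper: element-wise integration by parts, using that $w_h$ is piecewise affine so only the facewise-constant normal-flux boundary terms survive, and then killing these via the face-average property $\int_F r_h v\vert_\kappa \,\mbox{d}s = \int_F v \,\mbox{d}s$ that follows from the definition of $r_h$ together with the zero-mean-jump constraint. You simply spell out the details (including the treatment of faces on $\Gamma_C$) that the paper's one-line proof leaves implicit.
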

\begin{proof}
By integration by parts we have
\[
a(v - r_h v, w_h) = \sum_{\kappa \in \mathcal{T}_h} \sum_{F
  \in \partial \kappa} \int_F (v - r_h v) \nabla w_h \cdot n_{\kappa}
~\mbox{d}s = 0,
\]
where the last equality is a consequence of the definition of $r_h u$.
\end{proof}
\section{Stability estimates}\label{sec:stability}
The issue of stability of the discrete formulation is crucial since we
have no coercivity or inf-sup stability of the continuous formulation \eqref{abstract_prob} to rely on. 
By taking $v_h = u_h$ and $w_h = -z_h$, and defining the semi-norm 
$
|v_h|_{s_V} := s_V(v_h,v_h)^{\frac12}, \, \forall v_h \in V_h$ 
  and the norm $\|w_h\|_{s_W} := s_W(w_h,w_h)^{\frac12}, \, \forall w_h \in W_h$
 we obtain the stability
estimate
\begin{equation}\label{basic_stability}
|u_h|^2_{s_V}+\|z_h\|^2_{s_W}=-l(z_h)
\end{equation}
showing that we have control of $z_h$  and of the nonconforming part of the
approximation of $u_h$. If the stabilization operator
\eqref{eq:stab_dual_consist} is used, $\|\cdot\|_{s_W}$ is a semi-norm
similar to $|\cdot|_{s_V}$. The stability \eqref{basic_stability} is of course insufficient for any
useful analysis, however we will use it here as a starting point for
an inf-sup argument that implies existence of a unique discrete solution. To this end we introduce a
mesh-dependent norm
\begin{equation}\label{def:tnorm1}
\tn v_h \tn_V := \gamma^{\frac12}_V \|h \nabla v_h\|_h + \gamma^{\frac12}_V\|h
[n_F \cdot\nabla v_h ]\|_{\mathcal{F}_i \cup \mathcal{F}_{\Gamma_C}}+ |v_h|_{s_V},
\end{equation}
where
\[
\|h^{\frac12} [n \cdot\nabla v_h ]\|_{\mathcal{F}_i \cup \mathcal{F}_{\Gamma_C}}^2 := \sum_{F \in
  {\mathcal{F}_i \cup \mathcal{F}_{\Gamma_C}}} h_F \|
[n_F \cdot\nabla v_h ]\|^2_F.
\]
The following approximation estimate is an immediate consequence of \eqref{eq:approx},
\begin{equation}\label{tnorm_approx}
\tn v - r_h v \tn_V \leq C \gamma_V^{\frac12} h
|v|_{H^2(\Omega)},\quad \forall v \in H^2(\Omega).
\end{equation}
We will also use the composite norm
\[
\tn (u_h,z_h) \tn := \tn u_h \tn_V +  \| z_h \|_{s_W}.
\]
Since Dirichlet boundary conditions are set weakly on $\Gamma_C$ in
$V_h$ and on 
$\Gamma_C'$ in $W_h$, $\tn (u_h,z_h) \tn$ is a norm, when \eqref{stab_LS} is
used. When \eqref{eq:stab_dual_consist}, the jump of $\nabla z_h$ and
$\|h z_h\|_{1,h}$ can be included in the norm above.  We
now prove a fundamental stability result for the discretization
\eqref{stabFEM}.
\begin{theorem}\label{Thm:infsup}
Assume that $(\gamma_V \gamma_W) \leq (C_i c_{\mathcal{T}})^{-2}$. Then
there exists a positive constant $c_s$ independent of $\gamma_V,\,
\gamma_W$  such that 
there holds
\[
c_s \tn (x_h,y_h) \tn \leq \sup_{(v_h,w_h) \in \mathcal{V}_h} \frac{A_h[(x_h,y_h),(v_h,w_h) ]}{\tn (v_h,w_h) \tn}.
\]
\end{theorem}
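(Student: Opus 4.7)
The plan. The inf-sup estimate is proved by constructing, for each $(x_h,y_h)\in\mathcal{V}_h$, a composite test pair $(v_h,w_h)\in\mathcal{V}_h$ as a weighted sum of three explicit pieces, each engineered to activate one portion of $\tn(x_h,y_h)\tn$. The pieces are then calibrated via Young's inequality together with the smallness assumption $\gamma_V\gamma_W\leq (C_i c_{\mathcal T})^{-2}$, which is the knob that lets us absorb all cross contributions and obtain a lower bound proportional to $\tn(x_h,y_h)\tn^2$.

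The first piece is the diagonal choice $(v_h^{(1)},w_h^{(1)})=(x_h,-y_h)$: the two $a_h$ contributions cancel and $A_h$ reduces to $|x_h|_{s_V}^2+\|y_h\|_{s_W}^2$, recovering \eqref{basic_stability} and pinning down the stabilization parts of the composite norm. The second piece targets $\gamma_V\|h^{1/2}[n_F\cdot\nabla x_h]\|_{\mathcal{F}_i\cup\mathcal{F}_{\Gamma_C}}^2$: define $w_h^{(2)}\in W_h$ by prescribing its CR face-average degrees of freedom to be proportional to $h_F\gamma_V [n_F\cdot\nabla x_h]$ on the relevant faces and zero on $\mathcal{F}_{\Gamma_C'}$. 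Element-wise integration by parts in $a_h(x_h,w_h^{(2)})$ kills the bulk contribution (since $\Delta x_h\equiv 0$ on each $\kappa$), and because $[n_F\cdot\nabla x_h]$ is piecewise constant on faces the residual boundary sum collapses to precisely $c\,\gamma_V\|h^{1/2}[n_F\cdot\nabla x_h]\|^2$. Combining Lemma \ref{lem:invtrace} applied to $w_h^{(2)}$ with the inverse inequality \eqref{trace} bounds $\|w_h^{(2)}\|_{s_W}$ by $C\gamma_W^{1/2}\gamma_V\|h^{1/2}[n_F\cdot\nabla x_h]\|$, so testing with $(0,-w_h^{(2)})$ and absorbing the cross term $s_W(y_h,w_h^{(2)})$ via Young's inequality and the smallness hypothesis delivers the desired term up to a controlled fraction of $\|y_h\|_{s_W}^2$.

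The third piece controls the remaining bulk quantity $\gamma_V^{1/2}\|h\nabla x_h\|_h$, and is where I expect the main difficulty. Norm equivalence on $\mathbb{P}_1(\kappa)$ gives $\|h\nabla x_h\|_h\sim\|x_h\|_\Omega$, so the task reduces to extracting $\gamma_V\|x_h\|_\Omega^2$ from $A_h$. My intended construction is a test $w_h^{(3)}\in W_h$ whose face averages are proportional to $\gamma_V h_F\,\overline{\{x_h\}}|_F$ (with zero on $\mathcal{F}_{\Gamma_C'}$), coupled with Lemma \ref{lem:invtrace} used in both directions to bound $\|w_h^{(3)}\|_{s_W}$ and to identify the cross terms with quantities already controlled by Pieces 1 and 2. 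If that direct construction does not close cleanly — for instance because $V_h$ places no face-average constraint on $x_h$ along $\mathcal{F}_{\Gamma_{D}'}$ — I would fall back on an Aubin–Nitsche-type device, solving $-\Delta\Phi=x_h$ with mixed conditions so that $\Phi\in W$, testing with $(0,r_h\Phi)$, and invoking Lemma \ref{lem:rhH1proj}: since $r_h$ preserves face averages, the face integrals produced by element-wise integration by parts vanish, and elliptic regularity together with \eqref{eq:approx} turns the residual into terms dominated by Pieces 1 and 2.

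Assembly. The final test is $(v_h,w_h)=(x_h,\,-y_h-\alpha_2 w_h^{(2)}-\alpha_3 w_h^{(3)})$ with small positive weights $\alpha_2,\alpha_3$; Young's inequality and the condition $\gamma_V\gamma_W\leq(C_i c_{\mathcal T})^{-2}$ absorb every cross contribution into a fraction of the diagonal positive quantities, yielding a lower bound $\gtrsim\tn(x_h,y_h)\tn^2$ with $c_s$ independent of $\gamma_V,\gamma_W$. By construction and the Lemma \ref{lem:invtrace} bounds, $\tn(v_h,w_h)\tn\leq C\,\tn(x_h,y_h)\tn$, which closes the inf-sup inequality. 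The hardest step will be Piece 3: matching the bulk character of $\|h\nabla x_h\|_h$ to a face-based test function in the CR space, and in particular handling the non-Dirichlet boundary $\Gamma_D'$ where the face averages of $x_h$ are unconstrained, is the technical crux where Lemma \ref{lem:invtrace} and the smallness condition must work in concert.
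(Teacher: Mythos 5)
Your Pieces 1 and 2 coincide with the paper's argument: the diagonal test $(x_h,-y_h)$ yields $|x_h|_{s_V}^2+\|y_h\|_{s_W}^2$, and a test function $\xi_h\in W_h$ with face averages $\gamma_V h_F[n_F\cdot\nabla x_h]$, estimated through Lemma \ref{lem:invtrace} and the inverse inequality \eqref{trace} and absorbed under the hypothesis $\gamma_V\gamma_W\le(C_i c_{\mathcal T})^{-2}$, recovers the normal-jump term. That part is sound.

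The genuine gap is Piece 3. The claimed equivalence $\|h\nabla x_h\|_h\sim\|x_h\|_\Omega$ is false: only the inverse inequality $\|h\nabla x_h\|_h\le C\|x_h\|_\Omega$ holds (constants have vanishing gradient), so your reduction replaces the $h$-weighted target $\gamma_V^{1/2}\|h\nabla x_h\|_h$ by the strictly stronger unweighted quantity $\|x_h\|_\Omega$. Extracting $\gamma_V\|x_h\|_\Omega^2$ from $A_h$ cannot work here: $L^2$-control of the primal variable is exactly what the ill-posedness forbids and what Section \ref{sec:error} must recover through the conditional stability estimates \eqref{stab_1}--\eqref{stab_2}; if the discrete inf-sup delivered it, that machinery would be redundant. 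Concretely, your candidate $w_h^{(3)}$ with face averages proportional to $\gamma_V h_F\overline{\{x_h\}}\vert_F$ produces, after elementwise integration by parts, $\sum_F\gamma_V h_F|F|\,[n_F\cdot\nabla x_h]\,\overline{\{x_h\}}\vert_F$, a sign-indefinite cross term rather than $\gamma_V\|x_h\|_\Omega^2$, and the Aubin--Nitsche fallback founders on $\Gamma_D'$ for precisely the reason you flag. The paper's mechanism requires no third test function at all: since $\nabla x_h$ is piecewise constant, the discrete Poincar\'e inequality of \cite{EGH02} gives $\|\nabla x_h\|_h^2\le C\sum_F h_F^{-1}\|[\nabla x_h]\|_F^2$; the normal component of the jump is already controlled by Piece 2, and the tangential component is bounded via $\|h^{\frac12}[(I-n_F\otimes n_F)\nabla x_h]\|_F\le C\|h^{-\frac12}[x_h]\|_F$ by $|x_h|_{s_V}$ from Piece 1, yielding $h\gamma_V^{\frac12}\|\nabla x_h\|_h\le C(\gamma_V^{\frac12}\|h^{\frac12}[n_F\cdot\nabla x_h]\|_{\mathcal{F}_i\cup\mathcal{F}_{\Gamma_C}}+|x_h|_{s_V})$. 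The $h$-weight in the definition \eqref{def:tnorm1} of $\tn\cdot\tn_V$ is exactly what makes this closing step possible.
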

\begin{proof}
First we recall the positivity
\[
|x_h|_{s_V}^2 + \|y_h\|_{s_W}^2 = A_h[(x_h,y_h),(x_h,-y_h) ].
\]
Then observe that by integrating by parts in the bilinear form
$a_h(\cdot,\cdot)$ and using the zero mean value property of the
approximation space we have
\[
a_h(x_h,w_h) = \sum_{F \in \mathcal{F}} \int_F [n_F  \cdot \nabla x_h]
\{w_h\} ~\mbox{d}s.
\]
Define the function $\xi_h \in W_h$ such that for every face $F \in
\mathcal{F}_i \cup \mathcal{F}_{\Gamma_C}$
$$
\overline{ \{\xi_h\}}\vert_{F} := \gamma_V h_F  [n_F \cdot \nabla x_h]\vert_F.
$$
This is possible in the
nonconforming finite element space since the degrees of freedom may be
identified with the average value of the finite element function on an
element face. 
Using Lemma \ref{lem:invtrace} we have
\begin{equation}\label{eq:L2_stab_func}
 \|h^{-1} \xi_h\|^2_{\Omega} \leq c_{\mathcal{T}}^2   \sum_{F \in \mathcal{F}_{i}
   \cup \mathcal{F}_C}  \gamma_V^2 \|h_F^{\frac12}  [ n_F \cdot \nabla x_h]\|_F^2.
\end{equation}
Testing with $w_h = \xi_h$ and $v_h = 0$ we get
\begin{equation*}
\gamma_V \|h^{\frac12}[\nabla x_h \cdot n_F]\|_{\mathcal{F}_i \cup
  \mathcal{F}_{\Gamma_C}}^2 
=
 A_h[(x_h,y_h),(0,\xi_h) ] + s_W(y_h,\xi_h). 
\end{equation*}
For the stabilization terms in the right hand side we have the upper
bounds, using the inverse inequality (trace inequality if
\eqref{eq:stab_dual_consist} is used) \eqref{trace}(ii) and \eqref{eq:L2_stab_func}
\begin{multline*}
s_W(y_h,\xi_h) \leq \|y_h\|_{s_W} \|\xi_h\|_{s_W} \leq C_i  \|y_h\|_{s_W}
\gamma_W^{\frac12} \|h^{-1} \xi_h\|_\Omega
\\ \leq C_i c_{\mathcal{T}}  \|y_h\|_{s_W} (\gamma_V \gamma_W)^{\frac12}
\|\gamma_V^{\frac12} h^{\frac12}  [n \cdot \nabla x_h ]\|_{\mathcal{F}_i \cup
  \mathcal{F}_{\Gamma_C}}.
\end{multline*}
The consequence of this is that for $\gamma_V \gamma_W < (C_i c_{\mathcal{T}})^{-2} $ there holds
\begin{multline}\label{first_stab}
\frac12 \left(|x_h|_{s_V}^2 + \|y_h\|_{s_W}^2 +
  \gamma_V^{\frac12}\|h^{\frac12}[n \cdot \nabla x_h ]\|_{\mathcal{F}_i \cup
  \mathcal{F}_{\Gamma_C}}^2 \right)
\\
\leq  A_h[(x_h,y_h),(x_h,-y_h + \xi_h) ].
\end{multline}
To include the control of the gradient of $x_h$ we use a well-known discrete Poincar\'e
inequality for piecewise constant functions \cite{EGH02}
\[
\|\nabla x_h\|_h^2 \leq C\sum_{F \in \mathcal{F}_{i}
   \cup \mathcal{F}_C}  h_F^{-1}\|[\nabla x_h]\|_F^2.
\]
The right hand side is now upper bounded by decomposing the jump of
the gradient on its normal and tangential part and applying
the inverse inequality $$\|h^{\frac12} [(I - n_{F} \otimes n_{F}) \nabla x_h]\|_F
\leq C \|h^{-\frac12} [ x_h]\|_F$$ in the latter. Relating the right
hand side to the quantities in $\tn \cdot\tn _V$ already controlled in
\eqref{first_stab}, this leads to the upper bound
\[
\|\nabla x_h\|_h \leq C h^{-1} (\|h^{\frac12}  [n \cdot \nabla
x_h]\|_{\mathcal{F}_i \cup \mathcal{F}_{\Gamma_C}}+
\gamma_V^{-\frac12} |x_h|_{s_V}).
\]
and hence
\[
h \gamma_V^{\frac12} \|\nabla x_h\|_h\leq C (\gamma_V^{\frac12} \|h^{\frac12}  [n \cdot \nabla
x_h]\|_{\mathcal{F}_i \cup \mathcal{F}_{\Gamma_C}}+ |x_h|_{s_V}).
\]
We may conclude that there exists a positive constant
$c_0>0$ independent of $\gamma_V, \, \gamma_W$ such that
\[
c_0 \tn (x_h,y_h) \tn^2 \leq  A_h[(x_h,y_h),(x_h,-y_h + \xi_h) ].
\]
To end the proof we need to prove the stability of $\xi_h$ in
the triple norm. By the triangular inequality
\[
\tn (x_h , -y_h + \xi_h)\tn \leq \tn (x_h,y_h ) \tn + \tn (0, \xi_h) \tn.
\]
Using now an inverse inequality followed by the
argument of \eqref{eq:L2_stab_func} 
we arrive at
\begin{equation*}
\tn (0 , \xi_h)\tn  = \|\xi_h\|_{s_W} \leq \gamma_W^{\frac12} C_i
\|h^{-1} \xi_h\|_{\Omega} 
\leq C_i c_{\mathcal{T}}(\gamma_W\gamma_V)^{\frac12} \tn x_h \tn_V
\leq \tn (x_h,y_h) \tn.
\end{equation*}
This concludes the proof with $c_s = c_0/2$.
\end{proof}
\begin{remark}\label{rem:weak_stab}
If the stabilization operator defined by equation \eqref{eq:stab_dual_consist} is used,
stability of $\|h \nabla y_h\|$ may be included using a similar
argument. This control of the dual variable is nevertheless weaker
than that provided using \eqref{stab_LS}.
\end{remark}
\begin{corollary}
The formulation \eqref{stabFEM} admits a unique solution $(u_h,z_h)$.
\end{corollary}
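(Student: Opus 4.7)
The plan is to invoke the inf-sup condition just established in Theorem \ref{Thm:infsup}. Since $\mathcal{V}_h = V_h \times W_h$ is finite-dimensional and \eqref{stabFEM} is a square linear system, existence and uniqueness are equivalent, so it suffices to prove that the homogeneous problem admits only the trivial solution.

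Concretely, I would take two hypothetical solutions $(u_h^{(1)}, z_h^{(1)})$ and $(u_h^{(2)}, z_h^{(2)})$ of \eqref{stabFEM} and let $(x_h, y_h)$ denote their difference. By bilinearity of $A_h$ and cancellation of the right-hand side $l(w_h)$, one has $A_h[(x_h, y_h), (v_h, w_h)] = 0$ for every test pair in $\mathcal{V}_h$. Feeding this into Theorem \ref{Thm:infsup} yields
\[
c_s \tn (x_h, y_h) \tn \leq \sup_{(v_h,w_h) \in \mathcal{V}_h} \frac{A_h[(x_h,y_h),(v_h,w_h)]}{\tn (v_h,w_h) \tn} = 0,
\]
which forces $\tn (x_h, y_h) \tn = 0$.

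To conclude that $(x_h, y_h) = 0$ I need $\tn \cdot \tn$ to be a genuine norm on $\mathcal{V}_h$, not merely a seminorm. As noted directly after the definition \eqref{def:tnorm1}, this is the case when the stabilization \eqref{stab_LS} is used, since $\| \cdot \|_{s_W}$ is then a full $H^1$-type norm on $W_h$, while the weakly imposed Dirichlet conditions together with the broken-gradient and jump terms in $\tn \cdot \tn_V$ rule out any nontrivial $x_h$ in the kernel. This is really the only subtle point of the argument; if instead the weaker stabilization \eqref{eq:stab_dual_consist} is chosen, one enlarges $\tn \cdot \tn$ as indicated in Remark \ref{rem:weak_stab} before repeating the same reasoning. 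Injectivity of the square finite-dimensional map then produces the missing existence statement.
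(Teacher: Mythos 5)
Your argument is correct and follows essentially the same route as the paper: both reduce the claim to injectivity of the square finite-dimensional system and use Theorem \ref{Thm:infsup} to show that the homogeneous problem forces $\tn (x_h,y_h)\tn = 0$. Your explicit remark that $\tn\cdot\tn$ must be a genuine norm on $\mathcal{V}_h$ (guaranteed when \eqref{stab_LS} is used, as noted after \eqref{def:tnorm1}) makes a point the paper leaves implicit, but it does not change the substance of the proof.
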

\begin{proof}
The system matrix corresponding to \eqref{stabFEM} is a square matrix
and we only need to show that there are no zero eigenvalues.
Assume that $l(w_h) = 0$. It then follows by Theorem \ref{Thm:infsup}
that for any solution $(u_h,z_h)$ there holds
\[
c_s \tn (u_h,z_h) \tn \leq \sup_{(v_h,w_h) \in \mathcal{V}_h}
\frac{A_h[(u_h,z_h),(v_h,w_h) ]}{\tn (v_h,w_h) \tn} = 0,
\]
implying that $u_h=0$, $z_h=0$ which shows that the solution is unique.
\end{proof}
\section{Error estimates}\label{sec:error}
Even though Theorem \ref{Thm:infsup} provides us with a stability
estimate for the formulation, the norm is not sufficiently strong to
allow for a proof of convergence. Indeed the only notion of stability
at our disposal that can allow us to prove error estimates are
\eqref{stab_1} and \eqref{stab_2}. We will follow the approach
introduced in \cite{Bu14} and first prove that $\tn (u - u_h, z_h) \tn
\leq C h |u|_{H^2(\Omega)}$. This tells us that the stabilization
terms must vanish at an optimal rate for smooth $u$ and that $\|\nabla
u_h\|_h + \| \nabla z_h \|_h$ is uniformly bounded as $h \rightarrow
0$. Using this a priori bound we may conclude that the
$H^1$-conforming part of $u_h$ is uniformly bounded in $H^1$. This
allows us to write the error $u - u_h$ as $u - \tilde u_h + \tilde u_h
- u_h = \tilde e + e_h$, where $\tilde u_h$ denotes the $H^1$-conforming part of $u_h$. We may then control the part $\tilde e$
using the continuous dependence estimates \eqref{stab_1} and
\eqref{stab_2}, while $e_h$ is shown to be bounded by the
stabilization.
\begin{proposition}\label{prop:stab_conv}
Let $u \in H^2(\Omega)$ be the solution of \eqref{eq:Pb} and
$(u_h,z_h) \in \mathcal{V}_h$ the solution of \eqref{stabFEM}. Then
\begin{equation}\label{tn_bound}
\tn(u-u_h,z_h)\tn \leq C (\gamma_V^{\frac12}+ c_s^{-1}  (\gamma_W^{-\frac12} +\gamma_V^{\frac12} )) h\|u\|_{H^2(\Omega)}
\end{equation}
and 
\begin{equation}\label{H1apriori}
\|\nabla u_h\|_h \leq  C(1+ c_s^{-1}  (\gamma_W^{-\frac12}  \gamma_V^{-\frac12}+1 )) \|u\|_{H^2(\Omega)}.
\end{equation}
\end{proposition}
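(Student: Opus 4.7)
The plan is to apply the inf-sup stability of Theorem \ref{Thm:infsup} to the discrete part of the error after splitting off the interpolation error. I write $u - u_h = (u - r_h u) + \pi_h$ with $\pi_h := r_h u - u_h \in V_h$; by \eqref{tnorm_approx} the continuous part contributes $\tn(u - r_h u, 0)\tn \leq C \gamma_V^{\frac12} h |u|_{H^2(\Omega)}$, so it remains to bound $\tn(\pi_h, z_h)\tn$. Since $\tn(\pi_h, -z_h)\tn = \tn(\pi_h, z_h)\tn$, Theorem \ref{Thm:infsup} applied to the pair $(\pi_h, -z_h) \in \mathcal{V}_h$ reduces the task to estimating $A_h[(\pi_h, -z_h), (v_h, w_h)]$ for arbitrary test pairs.

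By bilinearity of $A_h$ and the compact equation $A_h[(u_h, z_h), (v_h, w_h)] = l(w_h)$, one obtains the clean residual identity
\[
A_h[(\pi_h, -z_h), (v_h, w_h)] = \bigl[a_h(r_h u, w_h) - l(w_h)\bigr] + s_V(r_h u, v_h).
\]
The $s_V$-term is controlled by noting that $u \in H^2 \cap V$ has no jumps on interior faces and vanishes on $\Gamma_D$, so $[r_h u] = [r_h u - u]$ on every face of $\mathcal{F}_i \cup \mathcal{F}_{\Gamma_D}$; the face-jump estimate in \eqref{eq:approx} then gives $|r_h u|_{s_V} \leq C \gamma_V^{\frac12} h |u|_{H^2(\Omega)}$, hence $|s_V(r_h u, v_h)| \leq C \gamma_V^{\frac12} h |u|_{H^2(\Omega)} \tn(v_h, w_h)\tn$. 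For the dual residual, Lemma \ref{lem:rhH1proj} (whose element-wise proof extends verbatim to $a_h$ since $\Delta w_h = 0$ on each element) gives $a_h(r_h u, w_h) = a_h(u, w_h)$, and the same integration by parts that underlies Lemma \ref{lem:weakcons}, applied with the choice $\nu_h = r_h u$, yields
\[
|a_h(u, w_h) - l(w_h)| \leq C\,\bigl\|h^{\frac12}(\nabla u - \{\nabla r_h u\})\cdot n_F\bigr\|_{\mathcal{F}}\,\bigl\|h^{-\frac12}[w_h]\bigr\|_{\mathcal{F}_i \cup \mathcal{F}_{\Gamma_C'}},
\]
whose first factor is $O(h|u|_{H^2(\Omega)})$ by \eqref{eq:approx}.

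The main technical step I expect to require care is controlling the jump factor $\|h^{-\frac12}[w_h]\|_{\mathcal{F}_i \cup \mathcal{F}_{\Gamma_C'}}$ by the dual stabilization norm: using that $[w_h]$ has zero mean on these faces (the Crouzeix--Raviart condition defining $W_h$), a discrete trace/Poincar\'e argument yields $\|h^{-\frac12}[w_h]\|_{\mathcal{F}_i \cup \mathcal{F}_{\Gamma_C'}} \leq C \|\nabla w_h\|_h$, after which the lower equivalence $\|w_h\|_{s_W} \geq c_1 \gamma_W^{\frac12} \|w_h\|_{1,h}$ for the stabilization \eqref{stab_LS} delivers an estimate of the form $C \gamma_W^{-\frac12} h |u|_{H^2(\Omega)} \|w_h\|_{s_W}$. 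Assembling all bounds in the inf-sup inequality gives $c_s \tn(\pi_h, z_h)\tn \leq C(\gamma_W^{-\frac12} + \gamma_V^{\frac12}) h |u|_{H^2(\Omega)}$, and a final triangle inequality with the $\tn(u - r_h u, 0)\tn$ contribution produces \eqref{tn_bound}. The a priori $H^1$ bound \eqref{H1apriori} follows at once by extracting $\gamma_V^{\frac12} h \|\nabla(u - u_h)\|_h \leq \tn(u - u_h, z_h)\tn$ from the first term of $\tn\cdot\tn_V$, combining with $\|\nabla u\|_\Omega \leq \|u\|_{H^2(\Omega)}$ via the triangle inequality, and dividing by $\gamma_V^{\frac12} h$.
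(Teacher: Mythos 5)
Your proposal is correct and follows essentially the same route as the paper: split off the interpolation error via \eqref{tnorm_approx}, apply the inf-sup stability of Theorem \ref{Thm:infsup} to the discrete error pair, and reduce the residual to the consistency term of Lemma \ref{lem:weakcons} with $\nu_h = r_h u$ (your identity $A_h[(\pi_h,-z_h),(v_h,w_h)] = a_h(r_h u,w_h) - l(w_h) + s_V(r_h u,v_h)$ is exactly the paper's $a_h(u_h-u,w_h)-s_W(z_h,w_h)-s_V(r_h u,v_h)$ after using Lemma \ref{lem:rhH1proj} and the discrete equations) plus the $s_V$-consistency term, both bounded by \eqref{eq:approx}. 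You are in fact more explicit than the paper on the step controlling $\|h^{-1/2}[w_h]\|$ by $\gamma_W^{-1/2}\|w_h\|_{s_W}$ via the zero-mean jump property, which the paper compresses into ``element wise trace inequalities''.
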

\begin{proof}
Using a triangle inequality and the approximation \eqref{tnorm_approx} it is
sufficient to consider the discrete error $\mu_h = u_h - r_h u$. By Theorem \ref{Thm:infsup} we have the
stability
\begin{equation}\label{bas_stab}
c_s \tn(\mu_h, z_h)\tn \leq \sup_{(v_h,w_h) \in \mathcal{V}_h} \frac{A_h[(\mu_h,z_h),(v_h,w_h) ]}{\tn (v_h,w_h) \tn}.
\end{equation}
Using the formulation and Lemma \ref{lem:rhH1proj} we observe that
\begin{multline*}
A_h[(\mu_h,z_h),(v_h,w_h) ] =
a_h(\mu_h,w_h)- s_W(z_h,w_h) + a_h(v_h,z_h) + s_V(\mu_h,v_h)\\
= a_h(u_h - u,w_h)- s_W(z_h,w_h)-s_V(r_h u,v_h).
\end{multline*}
Applying Lemma \ref{lem:weakcons} to the right hand side with $\nu_h :=
r_h u$ we have
\begin{equation}
|A_h[(\mu_h,z_h),(v_h,w_h) ]| \leq  \sum_{F \in \mathcal{F}_i \cup \mathcal{F}_{\Gamma_C'}} \int_{F}
|(\nabla u - \{\nabla r_h u\})\cdot n_F || [w_h]| ~\mbox{d}s +
|s_V(r_h u, v_h)|.
\end{equation}
We proceed using the Cauchy-Schwarz inequality followed by an element
wise trace inequalities and the approximation \eqref{eq:approx} to
obtain
\begin{multline*}
\sum_{F \in \mathcal{F}_i \cup \mathcal{F}_{\Gamma_C'}} \int_{F}
|(\nabla u - \{\nabla r_h u\})\cdot n_F || [w_h]| ~\mbox{d}s  +
|s_V(r_h u, v_h)|\\ \leq
C \left(\sum_{F \in \mathcal{F}_i \cup \mathcal{F}_{\Gamma_C'}}
 \gamma_W^{-1} \|h_F^{\frac12} (\nabla u - \{\nabla r_h u\})\cdot n_F
  \|^2_F\right)^{\frac12} \|w_h\|_{s_W} + |u - r_h u|_{s_V} |v_h|_{s_V} \\
\leq C(\gamma_W^{-\frac12} +\gamma_V^{\frac12} ) h \|u\|_{H^2(\Omega)} \tn (v_h,w_h) \tn.
\end{multline*}
Applying the above inequalities in \eqref{bas_stab} completes the
proof of \eqref{tn_bound}. The inequality \eqref{H1apriori} then is an
immediate consequence of \eqref{tn_bound} and the $H^1$-stability of
$r_h$.
\begin{multline*}
\|\nabla u_h\|_h \leq \|\nabla \mu_h\|_h+ \|\nabla r_h u\|_h\\
 \leq  C (\gamma_V^{-\frac12} 
h^{-1} \tn(\mu_h, z_h)\tn + \|u\|_{H^1(\Omega)}) \leq C(1+ c_s^{-1}  (\gamma_W^{-\frac12} +\gamma_V^{\frac12} ) \gamma_V^{-\frac12} ) \|u\|_{H^2(\Omega)}.
\end{multline*}
\end{proof}
\begin{theorem}\label{Thm:cont_dep_error_est}
Let $u \in H^2(\Omega)$ be the solution of \eqref{eq:Pb} and
$(u_h,z_h) \in \mathcal{V}_h$ the solution of \eqref{stabFEM}. Then,
with $j(\cdot)$ and $\Xi(\cdot)$ defined in \eqref{stab_1} or
\eqref{stab_2}, there exists $h_0<0$ and constant $C>0$ independent of
$h$ such that for all $h<h_0$
\[
|j(u - u_h)| \leq \Xi( \eta(h,l,u_h,z_h)) + C \gamma_V^{-\frac12} h |u_h|_{s_V}
\]
where 
\[
\begin{aligned}
\eta(h,l,u_h,z_h) &= C (h \|f\|_{\Omega} +  \gamma_V^{-\frac12} |u_h|_{s_V} +
\gamma_W^{\frac12} \|z_h\|_{s_W})\\& + C\left(\sum_{F \in
  \mathcal{F}_{\Gamma_C}} h \inf_{\alpha_F \in \mathbb{R}} \|\psi -
\alpha_F\|^2_F \right)^{\frac12}.
\end{aligned}
\]
In addition the following a priori bound holds
\[
\eta(h,l,u_h,z_h) +   |u_h|_{s_V} \leq C h
(\|f\|_{\Omega}+\|\psi\|_{H^{\frac12}(\Gamma_C)} + \|u\|_{H^2(\Omega)}),
\]
where the constant includes that of \eqref{tn_bound}.
\end{theorem}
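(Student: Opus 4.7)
The plan is to follow the decomposition foreshadowed before the theorem: write $u - u_h = \tilde e + (\tilde u_h - u_h)$, where $\tilde u_h \in V \cap H^1(\Omega)$ is a piecewise linear Oswald-type conforming reconstruction of $u_h$, obtained by nodal averaging with nodes on $\Gamma_D$ set to zero so that $\tilde u_h \in V$. A standard scaling argument on the reference patch yields
\[
\|u_h - \tilde u_h\|_\Omega + h \|\nabla(u_h - \tilde u_h)\|_h \leq C h \Bigl(\sum_{F} h_F^{-1}\|[u_h]\|_F^2\Bigr)^{\frac12} \leq C \gamma_V^{-\frac12} h\, |u_h|_{s_V}.
\]
Since $j$ is an $L^2$-seminorm in both cases \eqref{stab_1} and \eqref{stab_2}, the triangle inequality already gives $|j(u - u_h)| \leq |j(\tilde e)| + C \gamma_V^{-\frac12} h |u_h|_{s_V}$, accounting for the last term of the target bound. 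What remains is to estimate $|j(\tilde e)|$ via the continuous dependence \eqref{cont_dep_assump}.

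Since $\tilde e \in V$, it satisfies the abstract problem $a(\tilde e, w) = l_{\tilde e}(w) := l(w) - a(\tilde u_h, w)$ for $w \in W$. To bound $\|l_{\tilde e}\|_{W'}$, insert $r_h w \in W_h$ and use the first equation of \eqref{stabFEM}:
\[
l_{\tilde e}(w) = l(w - r_h w) - s_W(z_h, r_h w) + a_h(u_h - \tilde u_h, r_h w) + a(\tilde u_h, r_h w - w).
\]
The last term vanishes because $\nabla \tilde u_h$ is elementwise constant while $r_h w - w$ has zero mean on every face, so element-by-element integration by parts is zero. The remaining three terms give exactly the three components of $\eta$: the volume part $(f, w - r_h w)_\Omega$ is bounded by $C h \|f\|_\Omega \|w\|_W$ via \eqref{eq:approx}; the boundary part $(\psi, w - r_h w)_{\Gamma_C}$, after subtracting arbitrary face-constants $\alpha_F$ which $w - r_h w$ annihilates, is handled by a trace estimate and minimization over $\alpha_F$ to yield the face-oscillation term; $|s_W(z_h, r_h w)| \leq C \gamma_W^{\frac12} \|z_h\|_{s_W} \|w\|_W$ by Cauchy--Schwarz, the upper bound in the norm equivalence after \eqref{stab_LS}, and $H^1$-stability of $r_h$; and $|a_h(u_h - \tilde u_h, r_h w)| \leq C \gamma_V^{-\frac12} |u_h|_{s_V} \|w\|_W$ by the reconstruction bound. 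Collecting, $\|l_{\tilde e}\|_{W'} \leq C \eta$.

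The continuous dependence \eqref{cont_dep_assump} applies once $\eta$ is small and $\|\tilde e\|_{H^1(\Omega)}$ is uniformly bounded; the latter follows from \eqref{H1apriori} and $H^1$-stability of the reconstruction, and the former from the a priori bound proved below. This yields $|j(\tilde e)| \leq \Xi(C\eta)$, and absorbing $C$ into the constant defining $\eta$ produces the claimed estimate. For the a priori bound on $\eta + |u_h|_{s_V}$, Proposition \ref{prop:stab_conv} gives $|u_h|_{s_V} + \|z_h\|_{s_W} \leq \tn(u - u_h, z_h)\tn \leq C h \|u\|_{H^2(\Omega)}$, while a scaled Bramble--Hilbert argument on each face yields $\inf_{\alpha_F} \|\psi - \alpha_F\|_F \leq C h_F^{\frac12} |\psi|_{H^{\frac12}(F)}$, whence the face-oscillation contribution is at most $C h \|\psi\|_{H^{\frac12}(\Gamma_C)}$. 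The main technical subtlety is the Oswald-type reconstruction $\tilde u_h$ respecting the Dirichlet condition on $\Gamma_D$ with the stated $L^2$ and $H^1$ control --- routine but requiring care at nodes lying on $\Gamma_D$.
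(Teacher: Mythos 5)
Your proposal is correct and follows essentially the same route as the paper: the Oswald-type conforming reconstruction $\tilde u_h$ with the bound \eqref{discrete_interpol}, the residual $r = l - a(\tilde u_h,\cdot)$ estimated in $W'$ by inserting $r_h w$ and exploiting the face-mean property of $r_h$ to kill the consistency term, and the continuous dependence \eqref{cont_dep_assump} applied to $\tilde e$ after checking the uniform $H^1$ bound. The only difference is cosmetic — you let $a(\tilde u_h, r_h w - w)$ vanish and keep $a_h(u_h-\tilde u_h, r_h w)$, whereas the paper lets $a_h(u_h, w - r_h w)$ vanish and keeps $a_h(e_h,w)$ — and both yield the same bounds.
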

\begin{proof}
By the definition $j(\cdot)$ is an $L^2$-norm and therefore well defined for functions in
$V+V_h$. We then consider the decomposition of $u - u_h$ into one
$V$-conforming part and its residual. To this end introduce a
function $\tilde u_h \in V \cap V_h$. To get an $H^1$-conforming
approximation we define the values of $\tilde u_h$ in the vertices
$x_i$ of the tesselation $\mathcal{T}_h$ by $\tilde u_h\vert_{\bar
  \Gamma_D} = 0$ and,
\begin{equation}\label{quasi_interp}
\tilde u_h(x_i) = 
\textfrak{C}_{x_i}^{-1} \sum_{\kappa: x \in \kappa}
u_h(x_i)\vert_{\kappa},\quad  x_i \not \in \bar \Gamma_D,
\end{equation}
where $\textfrak{C}_{x_i}:=\mbox{card}(\{\kappa \in \mathcal{T}_h: x_i \in
\kappa\})$. With this definition it holds that $\tilde u_h  \in V \cap
V_h$. For the discrete error $e_h :=  u_h - \tilde u_h $ it is well known that the following estimate holds (see \cite{ABC03,KP03})
\begin{equation}\label{discrete_interpol}
\|e_h\| + h \|\nabla e_h\|_h \leq C h \gamma_V^{-\frac12} |u_h|_{s_V}.
\end{equation} 
We may then construct the $H^1$-conforming part of the error as $\tilde e := u - \tilde u_h \in V$, making it a
valid function to use in the continuous dependence
\eqref{cont_dep_assump}. 
For any $w \in W$ there holds
\[
a(\tilde e, w) = l(w) - a(\tilde u_h,w) =: \left< r,w\right>_{W',W}
\]
where we have identified $r \in
W'$.
To apply \eqref{cont_dep_assump} we need to upper bound $\|r\|_{W'}$,
this follows by
\begin{multline*}
\sup_{\substack{w \in W\\
\|w\|_{H^1(\Omega)}=1}} \left< r,w\right>_{W',W} = \sup_{\substack{w \in W\\
\|w\|_{H^1(\Omega)}=1}} (l(w- r_h w)  
- a_h(e_h,w) \\
- s_W(z_h,r_h w) -\underbrace{a_h(u_h,w - r_h w)}_{=0})
\end{multline*}
where the last term vanishes similarly as in Lemma \ref{lem:rhH1proj}.
For the second to last term there holds by the Cauchy-Schwarz
inequality (and approximation and a trace inequality if
\eqref{eq:stab_dual_consist} is used) and the $H^1$-stability of $r_h$,
\[
|s_W(z_h,r_h w)| \leq C \|z_h\|_{s_W} \gamma_W^{\frac12}\|\nabla r_h w\|_h \leq C  \gamma_W^{\frac12}\|z_h\|_{s_W}.
\]
Using Cauchy-Schwarz inequality and the discrete interpolation result
\eqref{discrete_interpol}
we obtain for the second term in the right hand side
\[
 a_h(e_h,w) \leq \|e_h\|_{1,h} \|w\|_{H^1(\Omega)} \leq C
 \gamma_V^{-\frac12} |u_h|_{s_V}.
\]
By the definition of $l(\cdot)$ we see that the first term on the
right hand side may be
bounded by
\begin{multline*}
l(w- r_h w) = (f, w- r_h w)_\Omega + \sum_{F \in \mathcal{F}_{\Gamma_C}} (\psi - \alpha_F, w- r_h
w)_{F} \\
 \leq C h \|f\|_{\Omega} + C \left(\sum_{F \in
  \mathcal{F}_{\Gamma_C}} h \inf_{\alpha_F \in \mathbb{R}} \|\psi -
\alpha_F\|^2_F \right)^{\frac12}.
\end{multline*}
It follows from \eqref{tn_bound} and standard approximation that for $h$ small enough, $\tilde e$ satisfies the assumptions of the continuous
dependence \eqref{cont_dep_assump}. However note that in order to apply \eqref{stab_1} or
\eqref{stab_2} to
$\tilde e$ we must show that there exists $E>0$ such that the bound $\|\tilde e\|_{H^1(\Omega)} \leq E
< \infty$ holds uniformly in $h$, since otherwise the constants in the
estimates may blow up. This a priori bound is a consequence of a triangle inequality,
\eqref{H1apriori} and
the estimate \eqref{discrete_interpol} as follows 
\begin{multline*}
\|\tilde e\|_{H^1(\Omega)} \leq \|u\|_{H^1(\Omega)} + \|u_h\|_{1,h}
+\|e_h\|_{1,h} \\
\leq   \|u\|_{H^1(\Omega)} + \|u_h\|_{1,h}+
 C  \gamma_V^{-\frac12} |u_h|_{s_V} \leq C(1+h) \|u\|_{H^2(\Omega)}.
\end{multline*}
Therefore, under our regularity assumption on the exact solution,
 the $H^1$-norm of the conforming part of the error is uniformly
bounded for all $h$.
For the case of \eqref{stab_1} or \eqref{stab_2} we note that for all
$\omega \subset \Omega$ there holds
\[
\|u - u_h\|_{\omega} \leq \|\tilde e\|_{\omega} + \|e_h\|_{\omega}
\leq \Xi( \eta(h,l,u_h,z_h)) + C \gamma_V^{-\frac12}h |u_h|_{s_V}
\]
where $\Xi(\cdot)$ is defined by \eqref{stab_1} or \eqref{stab_2}
depending on the choice of $\omega$. The upper bounds on
$\eta(h,l,u_h,z_h)$ and $|u_h|_{s_V}$ are immediate consequences of
Proposition \ref{prop:stab_conv} and the approximation properties of
piecewise constant functions.
\end{proof}
\subsection{The case of perturbed data}\label{sec:perturbed_data}
Very often in applications the problem under study is a Poisson equation
that it is reasonable to believe is well posed and the solution of
which satisfies a standard stability estimate and regularity in $H^2(\Omega)$. The problem is that the boundary conditions on
$\Gamma_D'$ are unknown. Instead we have at our disposal measurements
of the fluxes $\psi + \delta \psi$ on the boundary part
$\Gamma_C$. These measurements are usually polluted by measurement
errors, $\delta \psi$. It is then of interest to study how fine it is
reasonable to make the mesh, knowing that the perturbed data might not
be in the range of the operator. The perturbed problem may be written, find
$u_{\delta} \in V$ such that 
\begin{equation}\label{pert_Cauchy_prob}
a(u_{\delta},v) =  l_{\delta}(w) :=  l(w)+ \delta l(w)
\end{equation}
where 
\[
\delta l(w) :=  \int_{\Omega} \delta f w ~\mbox{d}x +
 \int_{\Gamma_C} \delta \psi w ~\mbox{d}s.
\]
We will assume that $\delta f \in L^2(\Omega)$ and $\delta \psi \in
L^2(\Gamma_C)$ and introduce the h-weighted dual norm,
\[
\|(\delta f,\delta \psi)\|_{h,W'} := h \|\delta f\|_{\Omega}+\|\delta f\|_{W'} +
h^{\frac12} \|\delta \psi\|_{\Gamma_C} + \|\delta \psi\|_{H^{-\frac12}(\Gamma_C)}.
\]
This norm will be used to measure the perturbation induced by errors
in measurements. The reason for the combination of strong and weak
norms is the following boundedness results.
\begin{lemma}\label{lem:pert_bound}
Let $s_W(\cdot,\cdot)$ be defined by \eqref{stab_LS}
\begin{equation}\label{disc_pert}
\sup_{\substack{w_h \in W_h\\
\| w_h\|_{s_W} = 1}} |l( w_h) - l_{\delta}(w_h)| \leq C
\gamma_{W}^{-\frac12} \|(\delta f,\delta \psi)\|_{h,W'}.
\end{equation}
\begin{equation}\label{cont_pert}
\sup_{\substack{w \in W\\
\|w\|_W = 1}} |l(r_h w) - l_{\delta}(r_h w)| \leq C \|(\delta f,\delta \psi)\|_{h,W'}.
\end{equation}
\end{lemma}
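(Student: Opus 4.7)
The plan is to bound $l(\cdot) - l_\delta(\cdot) = -\delta l(\cdot) = -(\delta f,\cdot)_\Omega - (\delta \psi,\cdot)_{\Gamma_C}$ by splitting the test function into a conforming piece, which pairs with the dual norms $\|\delta f\|_{W'}$ and $\|\delta \psi\|_{H^{-\frac12}(\Gamma_C)}$, and a residual piece carrying extra powers of $h$, which pairs with the strong norms $h\|\delta f\|_\Omega$ and $h^{\frac12}\|\delta \psi\|_{\Gamma_C}$. In both estimates this is the mechanism that assembles the mixed norm $\|(\delta f,\delta \psi)\|_{h,W'}$.

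For \eqref{cont_pert} the splitting is immediate: write $r_h w = w + (r_h w - w)$. The piece $w \in W$ is tested directly against $\delta f$ in $W'$ and against $\delta \psi$ in $H^{-\frac12}(\Gamma_C)$, using the continuity of the trace $\|w\|_{H^{\frac12}(\Gamma_C)} \leq C\|w\|_W = C$. The interpolation residual is controlled by \eqref{eq:approx} at $t = 1$, yielding $\|r_h w - w\|_\Omega \leq C h\|w\|_W$ and $\|r_h w - w\|_{\Gamma_C} \leq C h^{\frac12}\|w\|_W$; Cauchy--Schwarz against $\delta f$ in $L^2(\Omega)$ and $\delta \psi$ in $L^2(\Gamma_C)$ then produces the two strong contributions.

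For \eqref{disc_pert} the natural splitting is $w_h = \tilde w_h + (w_h - \tilde w_h)$, where $\tilde w_h \in W \cap W_h$ is the $H^1$-conforming quasi-interpolant built as in \eqref{quasi_interp} with nodal values on $\bar \Gamma_C'$ set to zero. Since $\|w_h\|_{s_W} = \gamma_W^{\frac12}\|\nabla w_h\|_h$ and a discrete Poincar\'e inequality is available on $W_h$ (thanks to the zero-mean boundary conditions on $\Gamma_C'$ whose measure is positive), the normalisation $\|w_h\|_{s_W} = 1$ gives $\|w_h\|_{1,h} \leq C\gamma_W^{-\frac12}$. Standard Crouzeix--Raviart analysis, using that $[w_h]$ has mean zero on every face, then yields $\|w_h - \tilde w_h\|_\Omega + h\|\nabla(w_h - \tilde w_h)\|_h \leq C h \|\nabla w_h\|_h \leq C h \gamma_W^{-\frac12}$, and combining this with the trace inequality \eqref{trace} gives $\|w_h - \tilde w_h\|_{\Gamma_C} \leq C h^{\frac12}\gamma_W^{-\frac12}$, while $\|\tilde w_h\|_W$ and $\|\tilde w_h\|_{H^{\frac12}(\Gamma_C)}$ are bounded by $C\gamma_W^{-\frac12}$. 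One then estimates $\delta l(\tilde w_h)$ against the dual norms and $\delta l(w_h - \tilde w_h)$ against the strong norms via Cauchy--Schwarz; summing gives the advertised $C\gamma_W^{-\frac12}\|(\delta f,\delta \psi)\|_{h,W'}$.

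The main obstacle I expect is the conforming-approximation step underlying \eqref{disc_pert}: one must produce $\tilde w_h \in W \cap W_h$ with the full set of $L^2$, $H^1$ and $L^2(\Gamma_C)$ estimates on $w_h - \tilde w_h$, each scaled by the correct power of $h$ and of $\gamma_W^{-\frac12}$, which in turn rests on the CR face-average property and a discrete Poincar\'e inequality. Once this is in place, the two bounds reduce to straightforward Cauchy--Schwarz arguments against the defining duality pairings of $W'$ and $H^{-\frac12}(\Gamma_C)$.
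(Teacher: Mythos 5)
Your proposal is correct and follows essentially the same route as the paper: both bounds are obtained by splitting the test function into an $H^1$-conforming part (paired with $\|\delta f\|_{W'}$ and $\|\delta\psi\|_{H^{-1/2}(\Gamma_C)}$) and a residual (paired with the $h$-weighted strong norms), with \eqref{cont_pert} handled via $r_hw = w + (r_hw - w)$ and \eqref{eq:approx} at $t=1$, and \eqref{disc_pert} via the conforming averaging interpolant of \eqref{quasi_interp} together with the Poincar\'e-type equivalence $\|w_h\|_{1,h}\leq C\gamma_W^{-1/2}\|w_h\|_{s_W}$ and the Crouzeix--Raviart jump estimate. The only difference is that you spell out the justification of the interpolation bound $\|w_h-\tilde w_h\|+h\|\nabla(w_h-\tilde w_h)\|_h\leq Ch\|\nabla w_h\|_h$ via the face mean-zero property, which the paper leaves implicit in its reference to an estimate ``similar to \eqref{discrete_interpol}''.
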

\begin{proof}
By definition $\delta l(w_h) = l(w_h) - l_{\delta}(w_h)$ and by the
linearity of the operator
\[
|\delta l(w_h)| \leq |\delta l(\tilde w_h)|+|\delta l(w_h - \tilde w_h)|,
\]
where $\tilde w_h \in W_h$ is the $H^1$-conforming part of $w_h$
defined similarly as in \eqref{quasi_interp}, but with $\tilde
w_h\vert_{\Gamma_C'} = 0$.
We may then use an estimate similar to \eqref{discrete_interpol}, but
with $\|\cdot\|_{s_W}$, to obtain the bounds
\begin{multline*}
|\delta l(\tilde w_h)| = |\left<\delta f,\tilde w_h \right>_{W',W} +
\left<\delta \psi,\tilde  w_h\right>_{H^{-\frac12},H^{\frac12}}| \\
\leq
C (\|\delta f\|_{W'}
+ \|\delta \psi\|_{{H^{-\frac12}}}) \|\tilde w_h\|_{H^1(\Omega)} \\
\leq
C (\|\delta f\|_{W'}
+ \|\delta \psi\|_{{H^{-\frac12}}}) 
( \|\tilde w_h - w_h\|_{1,h} + \|
w_h\|_{1,h})\\
\leq C (\|\delta f\|_{W'}
+ \|\delta \psi\|_{{H^{-\frac12}}}) \gamma_{W}^{-\frac12} \| w_h \|_{s_W}
\end{multline*}
and,
\begin{multline*}
|\delta l(w_h - \tilde w_h)| \leq \|f\|_{\Omega} \|w_h - \tilde
w_h\|_{\Omega} + \|\psi\|_{\Gamma_C}  \|w_h - \tilde
w_h\|_{\Gamma_C} \\
\leq C( h \|f\|_{\Omega} + h^{\frac12} \|\psi\|_{\Gamma_C}) \gamma_{W}^{-\frac12} \|w_h\|_{s_W}.
\end{multline*}
Similarly the bound on $|\delta l(r_h w)|$ is obtained by
\[
|\delta l(r_h w)| =  |\delta l(r_h w - w)+ \delta l(w)| \leq C \|(\delta f,\delta \psi)\|_{h,W'}
\]
where we used the approximation \eqref{eq:approx} with $t=1$ and the duality pairing $\delta
l(w) = \left<\delta f,w \right>_{W',W} + \left<\delta \psi, w\right>_{H^{-\frac12},H^{\frac12}}$.
\end{proof}

\begin{remark}
The Lemma \ref{lem:pert_bound} only holds when the stabilization of
\eqref{stab_LS} is used in \eqref{stabFEM}. If instead \eqref{eq:stab_dual_consist} is used, one
may only obtain control of $\|h \nabla w_h\|_h$ in the triple norm (see Remark
\ref{rem:weak_stab}), leading to an
additional factor $h^{-1}$ in the right hand side of \eqref{disc_pert}
above.
\end{remark}

Accounting for the perturbed data introduces a minor modification of the weak consistency that holds
for the formulation \eqref{stabFEM}, when the right hand side is
substituted for the perturbed functional $l_\delta(w_h)$.
\begin{lemma}\label{lem:weakcons_pert}(Weak consistency with perturbed
  data)
Let $u$ be the solution of \eqref{eq:Pb}, with $f \in L^2(\Omega)$ and
$\psi \in H^{\frac12}(\Gamma_{C})$ and let $(u_h,z_h) \in \mathcal{V}_h$ be
the solution of \eqref{stabFEM} with the right hand side given by
$l_\delta(w_h)$. Then,  for all $w_h \in W_h$, there holds,
\begin{multline}\label{galorth_pert}
|a_h(u_h - u,w_h) - s(z_h,w_h)|
\leq \sum_{F \in \mathcal{F}_i \cup  \mathcal{F}_{\Gamma_C'}} \inf_{\nu_h \in V_h}\int_{F}
|(\nabla u - \{\nabla \nu_h\})\cdot n_F || [w_h]| ~\mbox{d}s \\
+|\delta l(w_h)|.
\end{multline}
\end{lemma}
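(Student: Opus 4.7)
The proof will be a minor modification of Lemma \ref{lem:weakcons}, with the perturbation term $\delta l(w_h)$ appearing as an additional source on the right hand side of the residual identity. First I would repeat the continuous integration by parts: multiply $-\Delta u = f$ by $w_h \in W_h$, integrate element-wise, and collect the jump contributions to obtain
\[
a_h(u,w_h) = l(w_h) + \sum_{\kappa \in \mathcal{T}_h}\sum_{\substack{F \in \partial \kappa \\ F \not \in \Gamma_C}} \int_F \nabla u \cdot n_\kappa\, w_h~\mbox{d}s,
\]
exactly as in Lemma \ref{lem:weakcons}. This step uses only the regularity $f \in L^2(\Omega)$ and $\psi \in H^{\frac12}(\Gamma_C)$ and is unaffected by the data perturbation.

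Next I would invoke the perturbed discrete equation, namely the first line of \eqref{stabFEM} with $l(w_h)$ replaced by $l_\delta(w_h) = l(w_h) + \delta l(w_h)$, so that
\[
a_h(u_h,w_h) - s_W(z_h,w_h) = l(w_h) + \delta l(w_h).
\]
Subtracting this from the integration-by-parts identity above yields
\[
a_h(u_h - u, w_h) - s_W(z_h,w_h) = \delta l(w_h) - \sum_{\kappa \in \mathcal{T}_h}\sum_{\substack{F \in \partial \kappa \\ F \not \in \Gamma_C}} \int_F \nabla u \cdot n_\kappa\, w_h~\mbox{d}s.
\]

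Then I would handle the face sum exactly as in the proof of Lemma \ref{lem:weakcons}: for any $\nu_h \in V_h$ the mean-value property defining $W_h$ (with $[w_h]=0$ in the weak sense on $\mathcal{F}_{\Gamma_C'}$ and vanishing jumps on $\mathcal{F}_i$ in mean) allows us to subtract $\{\nabla \nu_h\}\cdot n_\kappa$ from $\nabla u \cdot n_\kappa$ without changing the sum, and then rewrite the element-wise face sum as a sum over faces involving the jump $[w_h]$ and the fixed normal $n_F$. Taking absolute values, applying the triangle inequality to split off $|\delta l(w_h)|$, and moving the modulus under the integral yields exactly \eqref{galorth_pert}.

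There is no real obstacle here: the zero-mean property of the Crouzeix--Raviart space on faces in $\mathcal{F}_i \cup \mathcal{F}_{\Gamma_C'}$ is what made the unperturbed argument work, and nothing in that mechanism interacts with the right-hand side perturbation. The only new ingredient is the additive term $|\delta l(w_h)|$, which is simply carried through the triangle inequality; a quantitative bound on this term is not required in the statement and is deferred to Lemma \ref{lem:pert_bound} when perturbed data are actually estimated.
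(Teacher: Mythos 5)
Your proposal is correct and follows essentially the same route as the paper: the author likewise repeats the integration-by-parts identity of Lemma \ref{lem:weakcons}, subtracts the perturbed discrete equation to obtain the residual identity with the extra $\delta l(w_h)$ term, and concludes exactly as in the unperturbed case. The sign bookkeeping in your residual identity matches the paper's, and deferring the quantitative estimate of $|\delta l(w_h)|$ to Lemma \ref{lem:pert_bound} is also what the paper does.
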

\begin{proof}
Following the proof of Lemma \ref{lem:weakcons} we now find that
\[
a_h(u_h - u,w_h) - s(z_h,w_h)
= -\sum_{\kappa \in \mathcal{T}_h}
\sum_{\substack{F \in \partial \kappa \\ F \not \in \Gamma_C}} \int_F \nabla u \cdot n_{\kappa} w_h ~
\mbox{d}s + \delta l(w_h).
\]
We conclude as in Lemma \ref{lem:weakcons}.
\end{proof}
It is then straightforward to derive modified versions of Proposition
\ref{prop:stab_conv} and Theorem \ref{Thm:cont_dep_error_est}. We give
the results for the perturbed case below, detailing only the parts of
the proofs that are modified by the perturbed right hand side in
\eqref{stabFEM}. Observe that if the problem \eqref{pert_Cauchy_prob}
admits a solution $u_\delta \in H^2(\Omega)$, then the Proposition
\ref{prop:stab_conv} still holds if $u$ is exchanged with
$u_\delta$. If on the other hand \eqref{pert_Cauchy_prob} does not
have a solution, or $\|u_\delta\|_{H^2(\Omega)}$ is very large, the perturbation can be included in the following way.
\begin{proposition}\label{prop:stab_conv_pert}
Let $u \in H^2(\Omega)$ be the solution of \eqref{eq:Pb} and
$(u_h,z_h) \in \mathcal{V}_h$ the solution of \eqref{stabFEM} using
\eqref{stab_LS} and with
the perturbed right hand side $l_\delta(w_h)$. Then
\begin{equation}\label{tn_bound_pert}
\tn(u-u_h,z_h)\tn \leq  C ((\gamma_V^{\frac12}+ c_s^{-1}  (\gamma_W^{-\frac12}
+\gamma_V^{\frac12} )) h \|u\|_{H^2(\Omega)} +  c_s^{-1}  \gamma_{W}^{-\frac12} \|(\delta f, \delta \psi)\|_{h,W'})
\end{equation}
and 
\begin{equation}\label{H1apriori_pert}
\|\nabla u_h\|_h \leq C ((1+ c_s^{-1}  (\gamma_W^{-\frac12}\gamma_V^{-\frac12} 
+1)) \|u\|_{H^2(\Omega)} +  c_s^{-1} \gamma_{W}^{-\frac12}  h^{-1} \|(\delta f, \delta \psi)\|_{h,W'}).
\end{equation}
\end{proposition}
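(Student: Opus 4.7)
The plan is to mimic the proof of Proposition \ref{prop:stab_conv} step by step, inserting Lemma \ref{lem:weakcons_pert} in place of Lemma \ref{lem:weakcons} and absorbing the new contribution $|\delta l(w_h)|$ using Lemma \ref{lem:pert_bound}. As before, by a triangle inequality and the approximation estimate \eqref{tnorm_approx}, it suffices to control the discrete error $\mu_h := u_h - r_h u$ in the triple norm, and the inf-sup condition of Theorem \ref{Thm:infsup} reduces this to bounding
\[
A_h[(\mu_h,z_h),(v_h,w_h)] = a_h(u_h-u,w_h) - s_W(z_h,w_h) - s_V(r_h u, v_h)
\]
for arbitrary $(v_h,w_h)\in \mathcal{V}_h$, where I used Lemma \ref{lem:rhH1proj} and the discrete equations, exactly as in Proposition \ref{prop:stab_conv}. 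Note that the discrete equations themselves now carry the perturbed right-hand side, but since the $\delta l$ term enters only through Lemma \ref{lem:weakcons_pert}, the algebraic cancellation leading to the identity above is unaffected.

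Next I would apply Lemma \ref{lem:weakcons_pert} with $\nu_h = r_h u$, obtaining the familiar face-residual term plus the extra contribution $|\delta l(w_h)|$. The face-residual and the $|s_V(r_h u, v_h)|$ terms are bounded exactly as in Proposition \ref{prop:stab_conv} via Cauchy-Schwarz, elementwise trace inequalities and \eqref{eq:approx}, yielding
\[
C(\gamma_W^{-\frac12}+\gamma_V^{\frac12}) h\|u\|_{H^2(\Omega)}\,\tn(v_h,w_h)\tn.
\]
For the new piece, Lemma \ref{lem:pert_bound} (using that $\gamma_W^{1/2}\|w_h\|_{1,h}\lesssim \|w_h\|_{s_W}$ for the stabilization \eqref{stab_LS}) gives directly
\[
|\delta l(w_h)| \leq C\gamma_W^{-\frac12} \|(\delta f,\delta\psi)\|_{h,W'}\,\|w_h\|_{s_W} \leq C\gamma_W^{-\frac12} \|(\delta f,\delta\psi)\|_{h,W'}\,\tn(v_h,w_h)\tn.
\]
Dividing by $\tn(v_h,w_h)\tn$ and taking the supremum gives \eqref{tn_bound_pert} after adjusting by the $c_s^{-1}$ factor from the inf-sup constant.

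Finally, \eqref{H1apriori_pert} follows in the same manner as \eqref{H1apriori} by writing $u_h = \mu_h + r_h u$, applying an inverse inequality $\|\nabla \mu_h\|_h \leq C \gamma_V^{-1/2} h^{-1} \tn\mu_h\tn_V$, invoking \eqref{tn_bound_pert}, and combining with the $H^1$-stability of $r_h$. The only step that requires any care is the use of Lemma \ref{lem:pert_bound}, which as noted in the remark after it, is precisely where the full norm $\|\cdot\|_{s_W}$ coming from \eqref{stab_LS} (rather than the weaker jump semi-norm \eqref{eq:stab_dual_consist}) is needed to control $\|w_h\|_{1,h}$; everything else is a routine transcription of the unperturbed argument, so I anticipate no significant obstacle.
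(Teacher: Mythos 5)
Your proposal is correct and follows essentially the same route as the paper: reduce to the discrete error via Theorem \ref{Thm:infsup}, substitute Lemma \ref{lem:weakcons_pert} for Lemma \ref{lem:weakcons}, and absorb the extra $|\delta l(w_h)|$ term with the bound \eqref{disc_pert} of Lemma \ref{lem:pert_bound}, which is exactly where the hypothesis that \eqref{stab_LS} is used becomes essential. The derivation of \eqref{H1apriori_pert} from \eqref{tn_bound_pert} and the $H^1$-stability of $r_h$ also matches the paper's argument.
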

\begin{proof}
The proof follows the arguments of the proof of Proposition
\ref{prop:stab_conv}, but this time we use the modified
weak consistency of Lemma \ref{lem:weakcons_pert}
\begin{multline}\label{pert_up_b1}
|A_h[(\mu_h,z_h),(v_h,w_h) ]| \leq  \sum_{F \in \mathcal{F}_i \cup \mathcal{F}_{\Gamma_C'}} \int_{F}
|(\nabla u - \{\nabla r_h u\})\cdot n_F || [w_h]| ~\mbox{d}s +|\delta
l(w_h)|\\
+ |s_V(r_h u, v_h)|.
\end{multline}
The second term of the right hand side is then bounded using
inequality \eqref{disc_pert}. The bound \eqref{H1apriori_pert} follows
as before using the definition of the norm $\tn w_h \tn_V$ and the
estimate \eqref{tn_bound_pert}.
\end{proof}
 We observe that the uniform $H^1$-bound on $u_h$ no longer
 holds. Indeed since it can not be assumed that the solution
 $u_\delta$ of the perturbed problem \eqref{pert_Cauchy_prob} exists
 the method can fail to converge in the limit $h \rightarrow
 \infty$. Assuming that the contribution from the discretization error
 dominates the upper bound \eqref{tn_bound_pert} an error estimate in
 the spirit of Theorem \ref{Thm:cont_dep_error_est} can nevertheless be derived.
\begin{theorem}\label{Thm:cont_dep_error_pert}
Let $u \in H^2(\Omega)$ be the solution of \eqref{eq:Pb} and
$(u_h,z_h) \in\mathcal{V}_h$ the solution of \eqref{stabFEM} using
\eqref{stab_LS} and with
the perturbed right hand side $l_\delta(w_h)$. Assuming that there
exists $h_0>0$ such that
\begin{equation}\label{small_pert}
\max(1, \gamma_W^{-\frac12})\|(\delta f, \delta \psi)\|_{h,W'} \leq  h_0 \|u\|_{H^2(\Omega)} 
\end{equation}
and
\[
\begin{aligned}
\eta_{\delta}(h,l,u_h,z_h) &= C (h \|f\|_{L^2(\Omega)} +  |u_h|_{s_V} +  \|z_h\|_{s_W}\\ &+ \left(\sum_{F \in
  \mathcal{F}_{\Gamma_C}} h \inf_{\alpha_F \in \mathbb{R}} \|\psi -
\alpha_F\|^2_F \right)^{\frac12}+ \|(\delta f, \delta \psi)\|_{h,W'} )<
1.
\end{aligned}
\]
for $h<h_0$. Then, with $j(\cdot)$ and $\Xi(\cdot)$ defined in \eqref{stab_1} or
\eqref{stab_2} we have
\begin{equation}\label{up_bound_pert}
|j(u - u_h)| \leq \Xi( \eta_\delta(h,l,u_h,z_h)) + C h |u_h|_{s_V}.
\end{equation}
In addition the following a priori bound holds
\[
\eta_{\delta}(h,l,u_h,z_h) +  |u_h|_{s_V}  \leq C h
(\|f\|+\|\psi\|_{H^{\frac12}(\Gamma_C)}) + C h_0 \|u\|_{H^2(\Omega)},
\]
where the constant includes that of \eqref{tn_bound_pert}.
\end{theorem}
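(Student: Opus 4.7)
The strategy closely follows the proof of Theorem~\ref{Thm:cont_dep_error_est}, tracking the additional contribution produced by the perturbed right-hand side $l_\delta$, which is controlled using Lemma~\ref{lem:pert_bound}. First I would split $u - u_h = \tilde e + e_h$ with $\tilde u_h \in V \cap V_h$ the $H^1$-conforming quasi-interpolant \eqref{quasi_interp} of $u_h$, $\tilde e := u - \tilde u_h \in V$, and $e_h := u_h - \tilde u_h$ obeying \eqref{discrete_interpol}. Since $j$ is an $L^2$-norm we have $|j(u - u_h)| \le |j(\tilde e)| + \|e_h\|_\omega$, and the second contribution is at once bounded by $C h |u_h|_{s_V}$, matching the last term in \eqref{up_bound_pert}.

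For the conforming part I would define the residual $r \in W'$ via $\langle r, w\rangle_{W',W} := a(\tilde e, w) = l(w) - a(\tilde u_h, w)$, using that $a(u, w) = l(w)$ because $u$ solves the unperturbed problem. Inserting $r_h w$, applying the first equation of \eqref{stabFEM} now with right-hand side $l_\delta$, and invoking the orthogonality $a_h(u_h, w - r_h w) = 0$ (as in Lemma~\ref{lem:rhH1proj}), I would arrive at
\[
\langle r, w\rangle_{W',W} = l(w - r_h w) - s_W(z_h, r_h w) + a_h(e_h, w) - \delta l(r_h w).
\]
The first three terms are estimated exactly as in Theorem~\ref{Thm:cont_dep_error_est}, producing the $h\|f\|_\Omega$, the data-oscillation, $\gamma_W^{\frac12}\|z_h\|_{s_W}$, and $\gamma_V^{-\frac12}|u_h|_{s_V}$ contributions to $\eta_\delta$. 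The genuinely new term $|\delta l(r_h w)|$ is handled by \eqref{cont_pert} of Lemma~\ref{lem:pert_bound}, contributing a further $C\|(\delta f, \delta \psi)\|_{h,W'}$. Taking the supremum over $\|w\|_W = 1$ gives $\|r\|_{W'} \le C \eta_\delta(h, l, u_h, z_h)$, and the hypothesis $\eta_\delta < 1$ provides the smallness needed to apply \eqref{cont_dep_assump}.

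The main subtlety is keeping $\|\tilde e\|_{H^1(\Omega)}$ under control, since it enters the constants in \eqref{stab_1}/\eqref{stab_2}. Here Proposition~\ref{prop:stab_conv_pert} gives the bound \eqref{H1apriori_pert}, whose perturbation contribution carries the factor $\gamma_W^{-\frac12} h^{-1}\|(\delta f, \delta \psi)\|_{h,W'}$; the smallness assumption \eqref{small_pert} is introduced precisely to control this quantity in terms of $\|u\|_{H^2(\Omega)}$ on the range $h < h_0$. Combining a triangle inequality with \eqref{H1apriori_pert} and \eqref{discrete_interpol} then yields an estimate of the form $\|\tilde e\|_{H^1(\Omega)} \le C \|u\|_{H^2(\Omega)}$, valid on that range, so that \eqref{cont_dep_assump} applies and gives $|j(\tilde e)| \le \Xi(C\eta_\delta)$; together with the previously noted bound on $\|e_h\|_\omega$ this proves \eqref{up_bound_pert}. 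The final a priori estimate on $\eta_\delta + |u_h|_{s_V}$ follows at once from \eqref{tn_bound_pert}, using $|u_h|_{s_V} + \gamma_W^{\frac12}\|z_h\|_{s_W} \le \tn(u - u_h, z_h)\tn$ and absorbing the perturbation contribution into a term of size $C h_0 \|u\|_{H^2(\Omega)}$ via \eqref{small_pert}, while the $h\|f\|_\Omega$ and data-oscillation terms are handled by standard approximation of piecewise constants.
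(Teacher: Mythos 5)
Your proposal is correct and follows essentially the same route as the paper: the paper's proof simply declares the argument analogous to that of Theorem~\ref{Thm:cont_dep_error_est}, notes that \eqref{small_pert} makes \eqref{tn_bound_pert} and \eqref{H1apriori_pert} take the same form as \eqref{tn_bound} and \eqref{H1apriori} so that $\|\tilde e\|_{H^1(\Omega)}$ stays uniformly bounded, and identifies the only new term in the residual as $\delta l(r_h w)$, bounded via \eqref{cont_pert} --- exactly the steps you spell out. Your write-up just makes explicit the details the paper leaves implicit.
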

\begin{proof}
Under the assumption \eqref{small_pert} the proof is analoguous to
that of Theorem \ref{Thm:cont_dep_error_est}, since by \eqref{small_pert}
equations \eqref{tn_bound_pert} and \eqref{H1apriori_pert} take the
same form as \eqref{tn_bound} and \eqref{H1apriori}. This means that
$\|\tilde e\|_{H^1(\Omega)}$ is uniformly bounded in $h$ under the
condition \eqref{small_pert} and therefore the constants in
\eqref{stab_1} and \eqref{stab_2} remain bounded. The only
difference in the proof appears in the estimation of the residual
term $r \in W'$, here
\begin{multline*}
\sup_{\substack{w \in W\\
\|w\|_{H^1(\Omega)}}} \left< r,w\right>_{W',W} = \sup_{\substack{w \in W\\
\|w\|_{H^1(\Omega)}}} (l(w- r_h w)  - \underbrace{\delta l(r_h w)}_{perturbation}
+ a_h(e_h,w) \\
- s_W(z_h,r_h w) -\underbrace{a_h(u_h,w - r_h w)}_{=0}).
\end{multline*}
The new contribution is the second term of the right hand side
due to the perturbed data. This term is upper bounded using
\eqref{cont_pert} and the result follows.
\end{proof}
We see that the estimate only is valid when $\|(\delta f, \delta
\psi)\|_{h,W'}$ is small compared to   $h \|u\|_{H^2(\Omega)}$. This
is not a very useful condition in practice since $\|u\|_{H^2(\Omega)}$
is unknown. However, assuming that $\|(\delta f, \delta
\psi)\|_{h,W'}$ is known, the quantities
that form the upper bound \eqref{up_bound_pert} are all computable, without any need to assume
additional regularity of the solution. Indeed $\eta_\delta(h,l,u_h,z_h) $
can be computed and the bound \eqref{small_pert} is necessary only to
ensure that the $H^1$-norm of $\tilde e$ stays bounded. This quantity
can also be controlled a posteriori using
\eqref{discrete_interpol}. It follows from Theorem \ref{Thm:cont_dep_error_pert} that mesh refinement will improve the solution as long as $\|\nabla u_h\|_h$ stays bounded, 
$|u_h|_{s_V} + \|z_h\|_{s_W}$ decreases and
\[
h \|f\|_{L^2(\Omega)} +  |u_h|_{s_V} +  \|z_h\|_{s_W} + \left(\sum_{F \in
  \mathcal{F}_{\Gamma_C}} h \inf_{\alpha_F \in \mathbb{R}} \|\psi -
\alpha_F\|^2_F \right)^{\frac12} >\|(\delta f, \delta \psi)\|_{h,W'}.
\]
\section{Numerical example}\label{sec:numerical}
 In this section we have used a version
of \eqref{stabFEM} where a consistent penalty method is used for the
imposition of the boundary conditions. This leads to a weak implementation of boundary conditions
reminiscent of Nitsche's method, for which the above analysis holds
after minor modifications. This procedure can be very useful, since
many finite element packages can not impose different Dirichlet
conditions on the trial and test spaces. The formulation with consistent penalty imposition of the boundary
conditions reads find $(u_h,z_h) \in X_h^\emptyset \times X_h^\emptyset$ such that,
\begin{equation}\label{stabFEM_weak}
\begin{array}{rcl}
a_h(u_h,w_h) - s_W(z_h,w_h) &=&l(w_h) - \displaystyle \sum_{F \subset \Gamma_D} \int_{F }
\nabla w_h \cdot n_\kappa g ~\mbox{d}s\\[3mm]
a_h(v_h,z_h) + s_V(u_h,v_h)  &=& \displaystyle  \sum_{F \subset \Gamma_D} \int_{F }\gamma_V h_F^{-1} g v_h ~\mbox{d}s
\end{array} 
\end{equation}
for all $(v_h,w_h) \in X_h^\emptyset \times X_h^\emptyset$. Here $g$
denotes some Dirichlet data on $\Gamma_D$ and
$X_h^\emptyset$ denotes the nonconforming finite element space with no boundary conditions 
imposed and the bilinear forms are modified as follows
\begin{multline}\label{weak_ah}
a_h(v_h,w_h) := \sum_{\kappa \in \mathcal{T}_h} \int_\kappa \nabla v_h
\cdot \nabla w_h ~ \mbox{d} x\\
- \sum_{F \in \partial \Omega} \left( \int_{F \cap \Gamma_C'}
\nabla v_h \cdot n_\kappa w_h ~\mbox{d}s + \int_{F \cap \Gamma_D}
\nabla w_h \cdot n_\kappa v_h ~\mbox{d}s\right),
\end{multline}
\begin{equation}\label{eq:num_stab1}
s_W(z_h,w_h)  := \sum_{\kappa \in \mathcal{T}_h} \int_\kappa \gamma_W
\nabla z_h\cdot \nabla w_h~\mbox{d}x + \sum_{F \in \mathcal{F}_{\Gamma_C'}} \int_{F} \gamma_{bc} h_F^{-1} z_h w_h ~\mbox{d}s,
\end{equation}
or 
\begin{equation}\label{eq:weak_dual_stab_bc}
s_W(z_h,w_h)  := \sum_{F \in \mathcal{F}_i} \int_{F} \gamma_W
h_F^{-1} [z_h][w_h] ~\mbox{d}s+  \sum_{F \in \mathcal{F}_{\Gamma_C'}} \int_{F} \gamma_{bc} h_F^{-1} z_h w_h ~\mbox{d}s.
\end{equation}
The stabilization term $s_V(\cdot,\cdot)$ of equation \eqref{stab_primal} is used
without modification.

As a numerical illustration of the theory we consider the original Cauchy
problem discussed by Hadamard. In \eqref{eq:Pb} let $\Omega:= (0,\pi)
\times (0,1)$, $\Gamma_C:=\{ x \in (0,\pi); y=0\}$, $\Gamma_D:=
\Gamma_C \cup \{x \in \{0,\pi \}; y \in (0,1)  \}$ and 
\begin{equation}\label{Cauchy_data}
\psi := A_n \sin(n x).
\end{equation}
It is then straightforward to verify that 
\begin{equation}\label{exact_Cauchy}
u_n = A_n n^{-1} \sin(nx) \sinh(n y)
\end{equation}
solves \eqref{eq:Pb}. One may easily show show that the
choice $A_n = n^{-p}$, $p > 0$ leads to $\psi \rightarrow 0$ uniformly
as $n \rightarrow \infty$,
whereas, for any $y>0$, $u_n(x,y)$ blows up. Stability can only be obtained
conditionally, using that $\|u_n\|_{H^1(\Omega)} < E$ for some $E>0$, leading to the
relations \eqref{stab_1} and \eqref{stab_2} (see
\cite{ARRV09} for detailed proofs and further discussion of
\eqref{cont_dep_assump}, \eqref{stab_1}. \eqref{stab_2}.)

We choose $A_n:=1$ in \eqref{Cauchy_data} and study the error in the
relative $L^2$-norms,
\begin{equation}\label{relative_L2}
\frac{\|u - u_h\|_{\Omega_\zeta}}{\|u\|_{\Omega_\zeta}}, \mbox{ where } \Omega_\zeta := (0,\pi)\times (0,\zeta),\quad \zeta \in \{1/8,\, 1/4,\, 1/2,\, 1\}.
\end{equation}
Recall that for $\zeta<1$ the stability \eqref{cont_dep_assump},
holds with \eqref{stab_1} and for $\zeta = 1$ \eqref{cont_dep_assump} with
\eqref{stab_2} holds. All computations below were performed using the
package FreeFEM++ \cite{He12}.
\subsection{Tuning of penalty parameters}
For $s_V$ we used a single penalty parameter $\gamma_V$, whereas
numerical experience showed that it is advantageous to use different
parameter in the interior and on the boundary for $s_W$.
We therefore
have three parameters to choose, $\gamma_V$, $\gamma_W$ and
$\gamma_{bc}$. We chose $n=3$ in \eqref{Cauchy_data} and studied the global ($\zeta=1$) relative 
$L^2$-error on an
unstructured mesh with $h \approx 0.1$ under the variation of the
stabilization parameters.

\subsubsection{Using the stabilization \eqref{eq:weak_dual_stab_bc} }
Numerical experimentation showed that the parameter $\gamma_{bc}$ had
to be set sufficiently big and we fixed it to $\gamma_{bc} =
100$. They also showed that $\gamma_V= \gamma_W$ was a reasonable
choice and we therefore varied the parameter $\gamma_V = \gamma_W$ in the interval $(0.0005, 0.5)$. The result
is shown in the left plot of Figure \ref{fig:parameter_study}. We see
that $\gamma_V=\gamma_W=0.01$ is a good choice for the parameter in this case. 
\subsubsection{Using the stabilization \eqref{eq:num_stab1} }
For the
method using the stabilization \eqref{eq:num_stab1},  numerical
experimentation showed that
$\gamma_V=\gamma_W$ was not a good choice and we therefore fixed
$\gamma_V = 0.01$ and varied $\gamma_W$ in the interval $(10^{-6}, 1.0)$. The result is shown in the right plot of Figure
\ref{fig:parameter_study}. 
\subsubsection{Further remarks and parameter choices}
The conclusions were that both methods are
relatively robust with respect to the variations of the penalty parameters, the error
remained under $10\%$ for a wide range of stabilization parameters on
this coarse mesh. Numerical experiments not reported here however showed that
the parameter $\gamma_W$ giving the minimum error in the right plot of
Figure
\ref{fig:parameter_study}, performed worse on finer meshes, in
particular when $n$ was increased. We therefore used a smaller value
of $\gamma_W$ in this case.

In the computations below we used $\gamma_{bc} = 100$ and either
\eqref{eq:weak_dual_stab_bc} with $\gamma_V = \gamma_W = 0.01$ or
\eqref{eq:num_stab1} with $\gamma_V = 0.01$ and $\gamma_W =
10^{-5}$. 
\begin{figure}
\includegraphics[height=6cm]{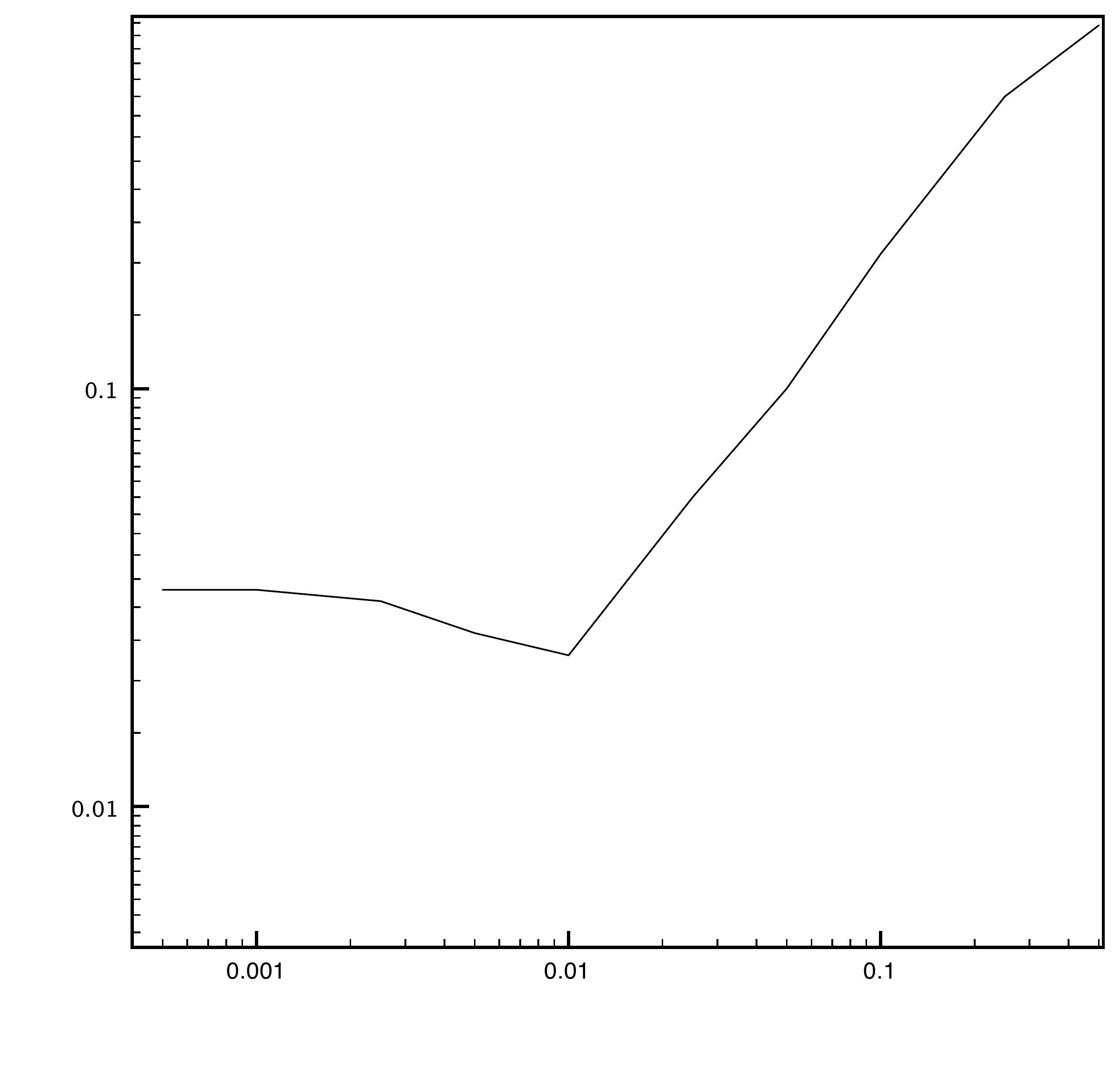}\includegraphics[height=6cm]{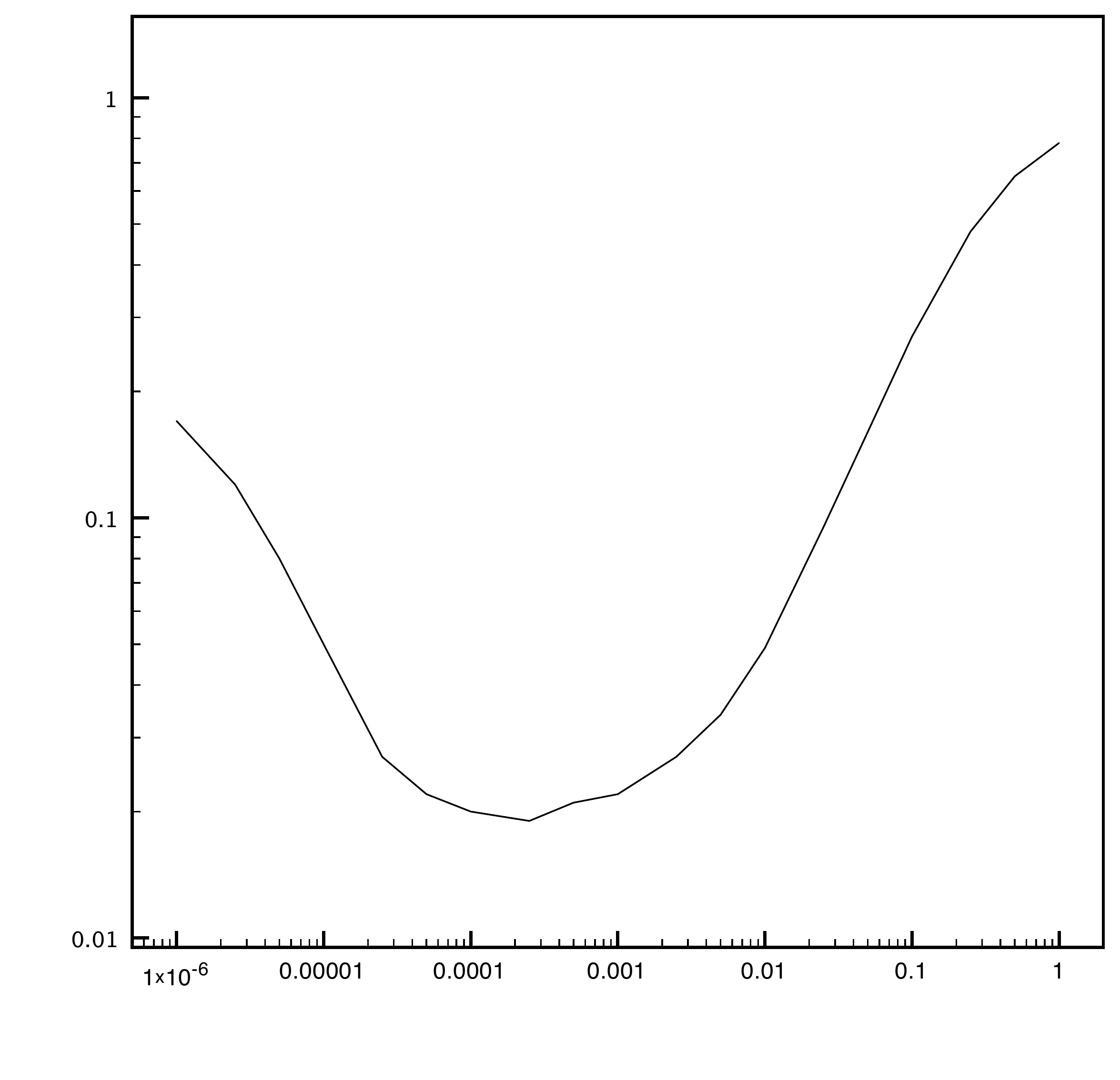}
\caption{Study of the global relative $L^2$-error  against penalty parameters
  $\gamma_V$ and $\gamma_W$ for the methods using
  \eqref{eq:weak_dual_stab_bc} (left) and \eqref{eq:num_stab1} (right).}\label{fig:parameter_study}
\end{figure}
\subsection{Convergence studies}
We know that the problem becomes increasingly ill-posed as $n$ becomes
large, but that the stabilities given by \eqref{cont_dep_assump} and
\eqref{stab_1}, \eqref{stab_2} hold independently of $n$. We
performed computations varying $n$ from $1$ to $5$ on a series of
unstructured meshes with approximate meshsizes in the
set, $$\{0.1,\,0.05,\,0.025, \, 0.0125, \, 0.008333\}.$$ Herein we only
present the results of the computations for odd $n$. The results are
given in Figures \ref{fig:n=1} - \ref{fig:n=5}. We have studied the
relative $L^2$-norms for the four different values of $\zeta$ given in
\eqref{relative_L2}. Each value of $\zeta$ is represented by a
different symbol according to $\zeta=1$, symbol: {\Large{$\circ$}};
$\zeta=1/2$, symbol: {\small{$\square$}};
$\zeta=1/4$, symbol:  {\Large{$\diamond$}}; $\zeta=1/8$, symbol:
{\small{$\bigtriangleup$}}. Filled symbols are used for graphs
representing the $H^1$-error. As we increase the value of $n$ the
$H^1$-norm of the exact solution, denoted $E$, increases and is given
in the captions of the figures. We see that the error level increases
with increasing $E$.
For the lower values $n=1$ and $n=3$ we observe
typically $O(h^{\frac32})$ convergence in the $L^2$-norm for all
quantities. The global error takes relatively smaller values for
higher $n$ compared to the local error quantities as an effect of the normalization. For $n=5$ the method
using \eqref{eq:weak_dual_stab_bc} appears to have approximately
$O(h)$ convergence. The same global convergence behavior is observed
for the method using \eqref{eq:num_stab1}, but in this case the
convergence is uneven although the errors are smaller than for
\eqref{eq:weak_dual_stab_bc}. The observed superconvergence compared
to the theoretical results can be attributed to the fact that in these
computations, the error in the $H^1$-norm also decreased, making the
constants $C(E)$ and $C_1(E)$ of equations \eqref{stab_1}, \eqref{stab_2} decrease as
well. We illustrate this for the case $n=5$ in Figure \ref{fig:H1n=5}.

 Observe that \eqref{stab_1} and
\eqref{stab_2} are valid also in the limit of $n \rightarrow \infty$
and it appears that the logarithmic continuous dependence is not
dominating on the relatively low values of $n$ and large values of
$h$, considered herein. For larger values of $n$ the $H^2$-norm of the
exact solution becomes so large that the computations on the meshes
considered are not in the asymptotic range. For $n=7$ the
linear decrease predicted in Proposition \ref{prop:stab_conv},
independently of the stability of the problem, was not observed.
\begin{figure}
\includegraphics[height=6cm]{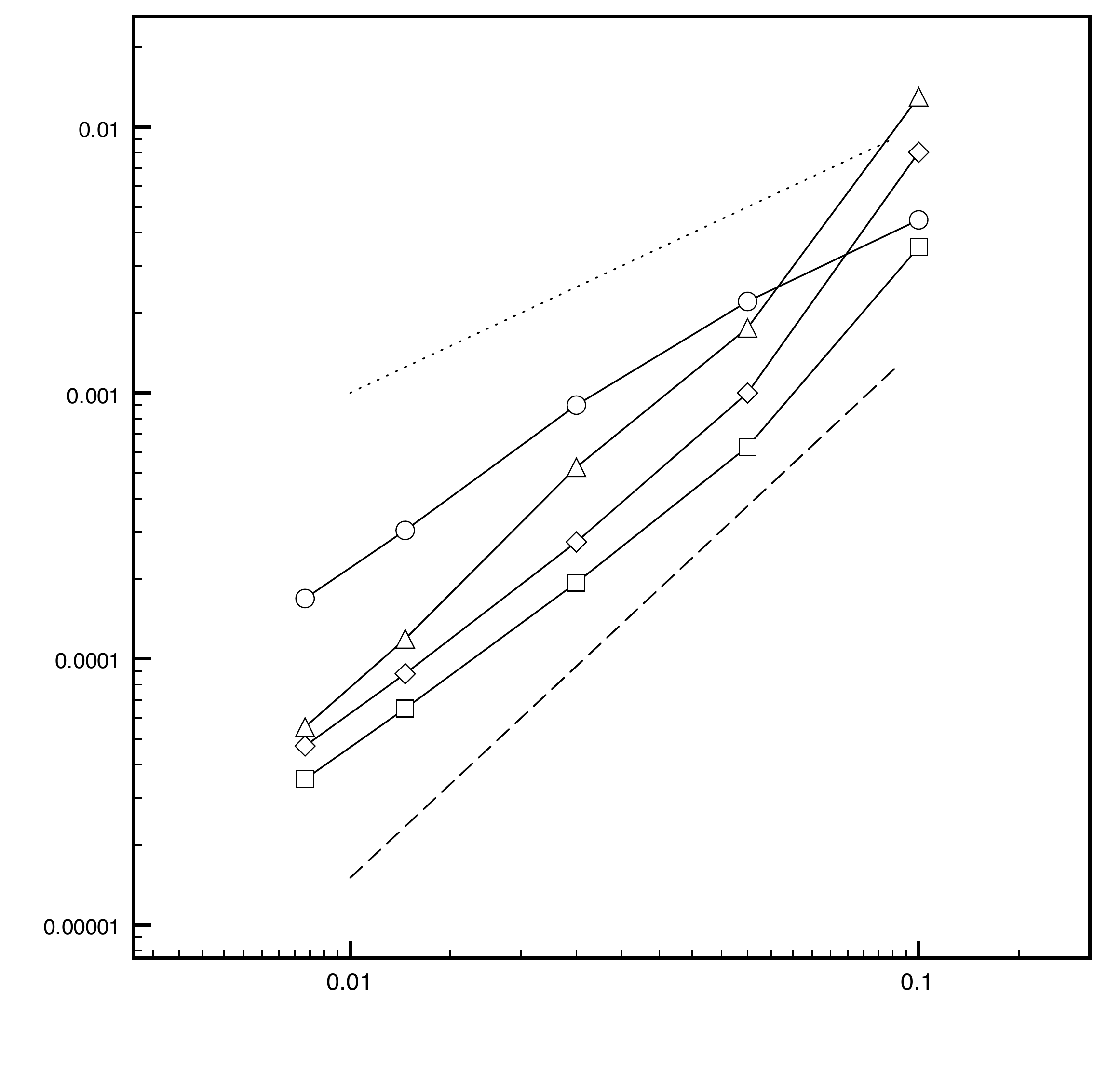}\includegraphics[height=6cm]{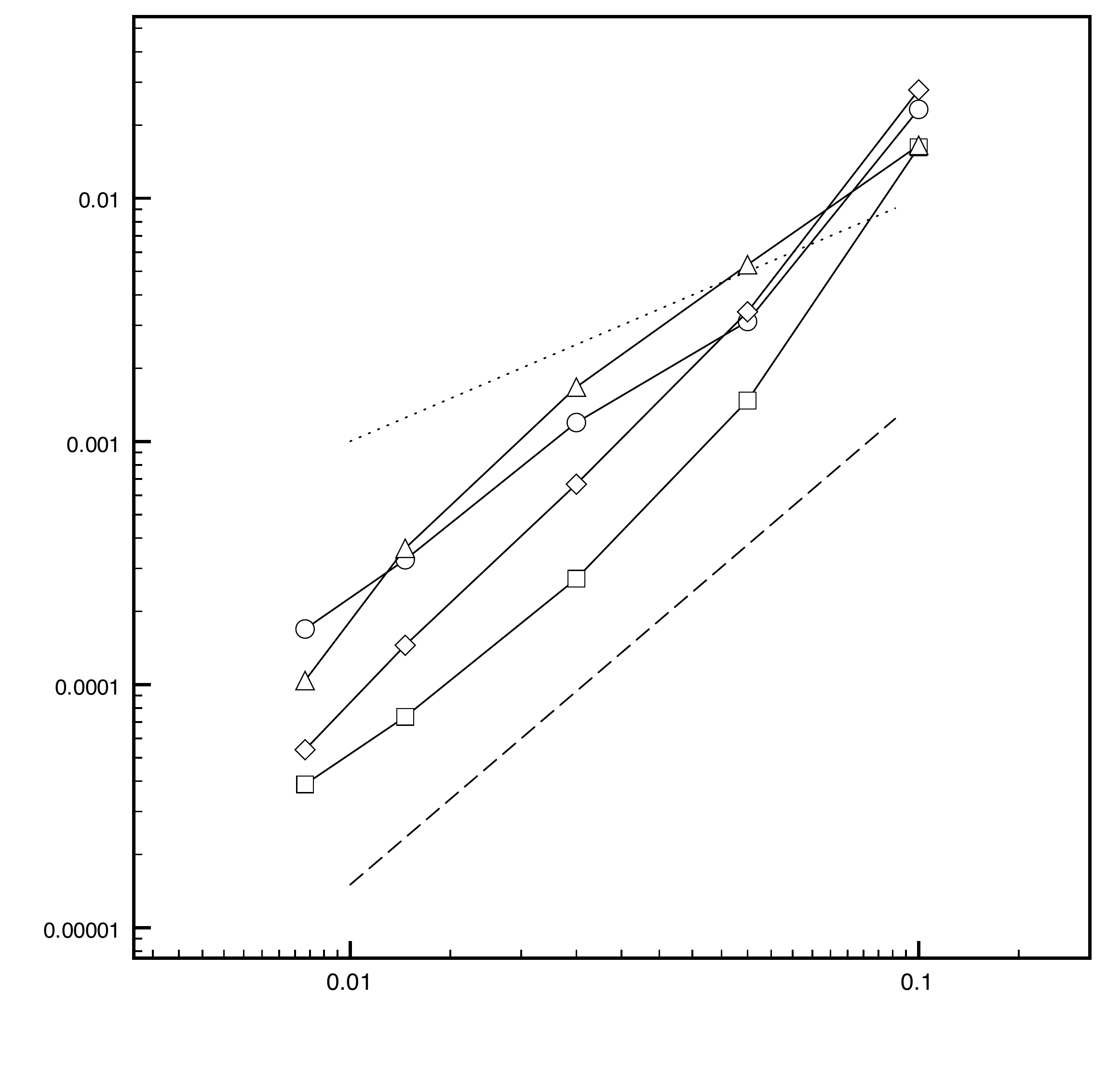}
\caption{Relative $L^2$-error  against mesh-size, $n=1$ in
  \eqref{Cauchy_data} and $E=1.68$, using the stabilizations
  \eqref{eq:weak_dual_stab_bc} (left) and \eqref{eq:num_stab1}
  (right).  Reference curves: $y = 0.1 x$ (dotted) and $y= 0.15 x^2$ (dashed)}\label{fig:n=1}
\end{figure}
\begin{figure}
\includegraphics[height=6cm]{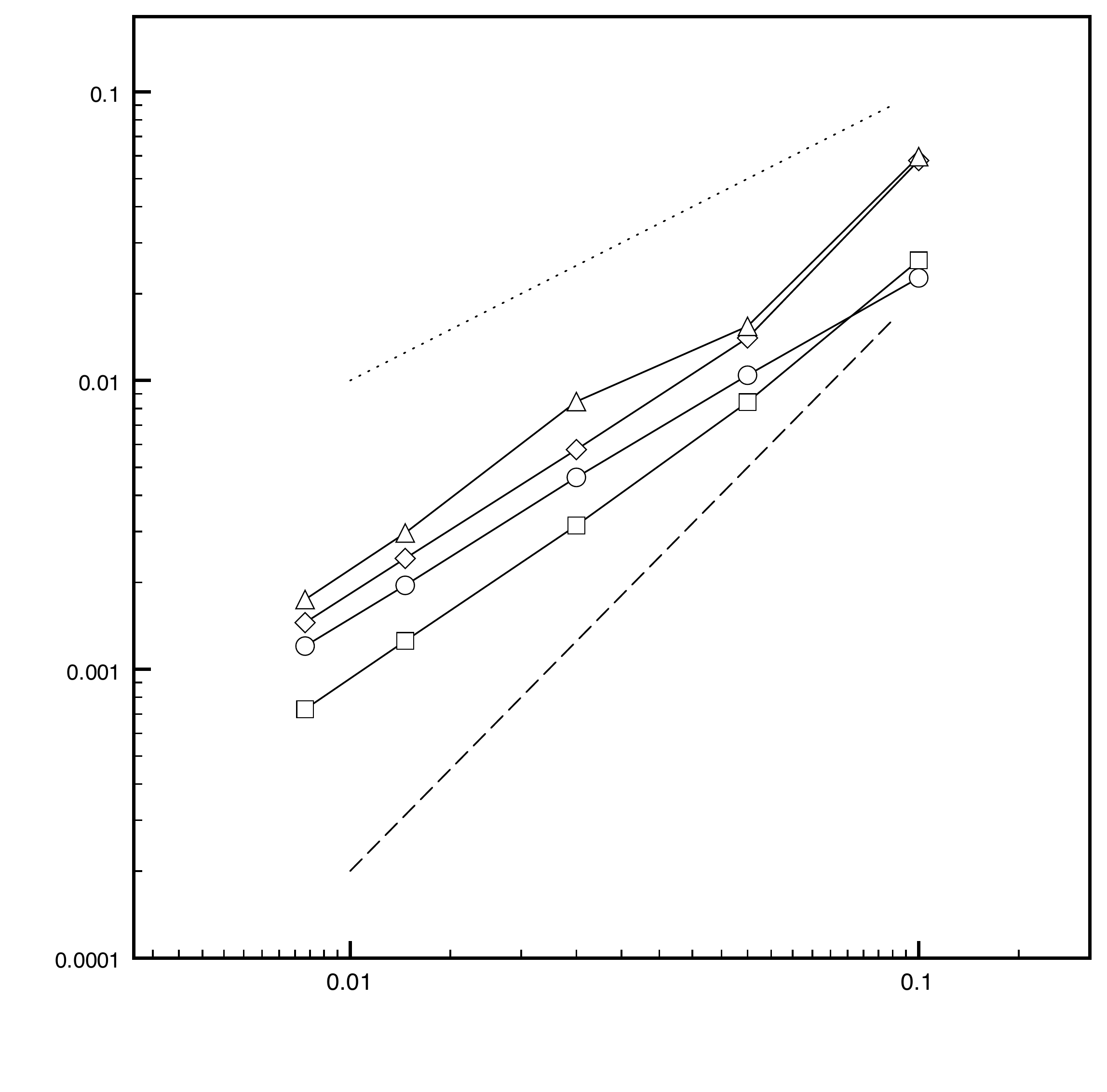}\includegraphics[height=6cm]{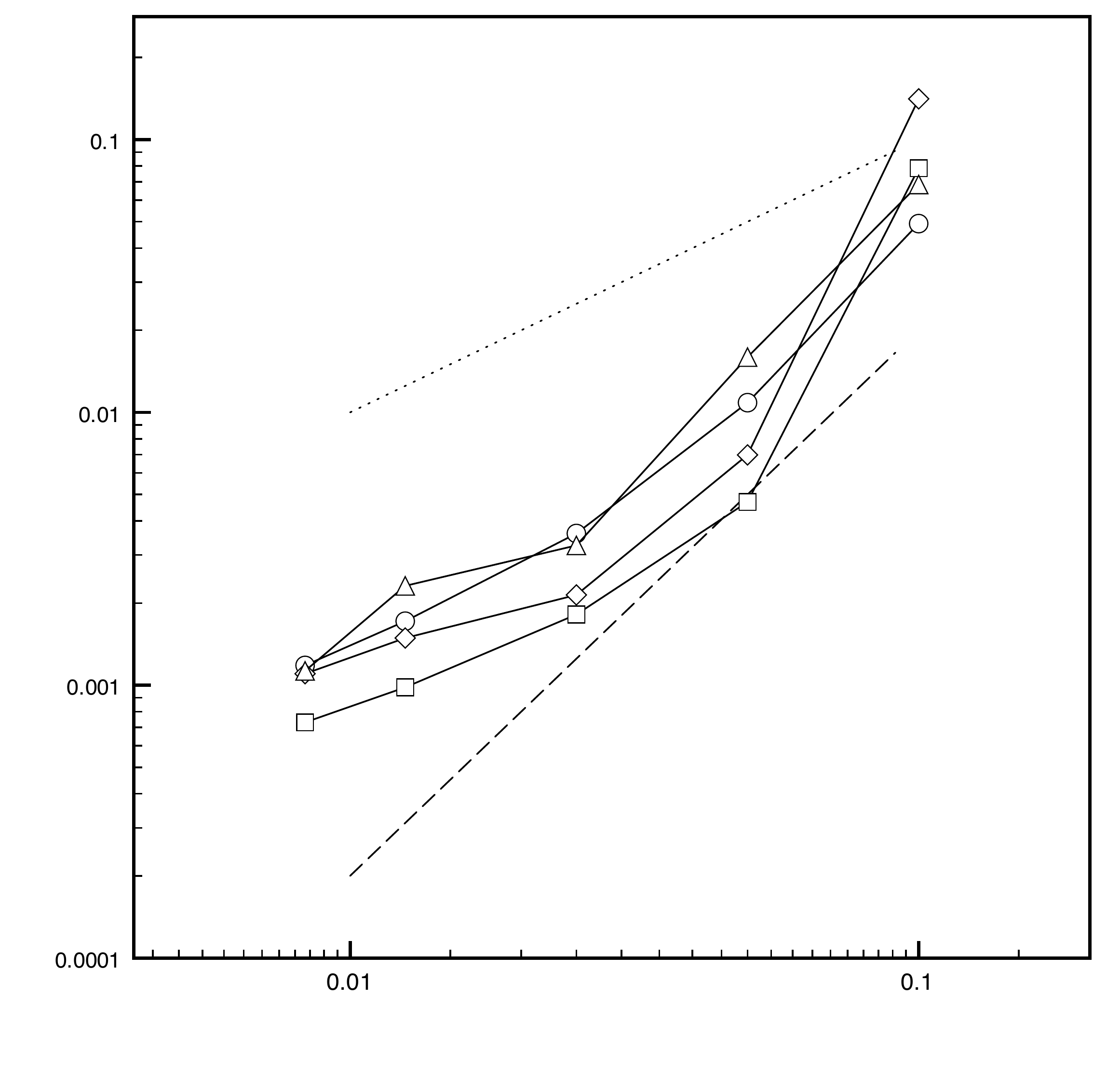}
\caption{Relative $L^2$-error  against mesh-size, $n=3$ in
  \eqref{Cauchy_data} and $E=7.27$, using the stabilizations
  \eqref{eq:weak_dual_stab_bc} (left) and \eqref{eq:num_stab1}
  (right). Reference curves: $y = x$ (dotted) and $y= 2x^2$ (dashed)}\label{fig:n=3}
\end{figure}
\begin{figure}
\includegraphics[height=6cm]{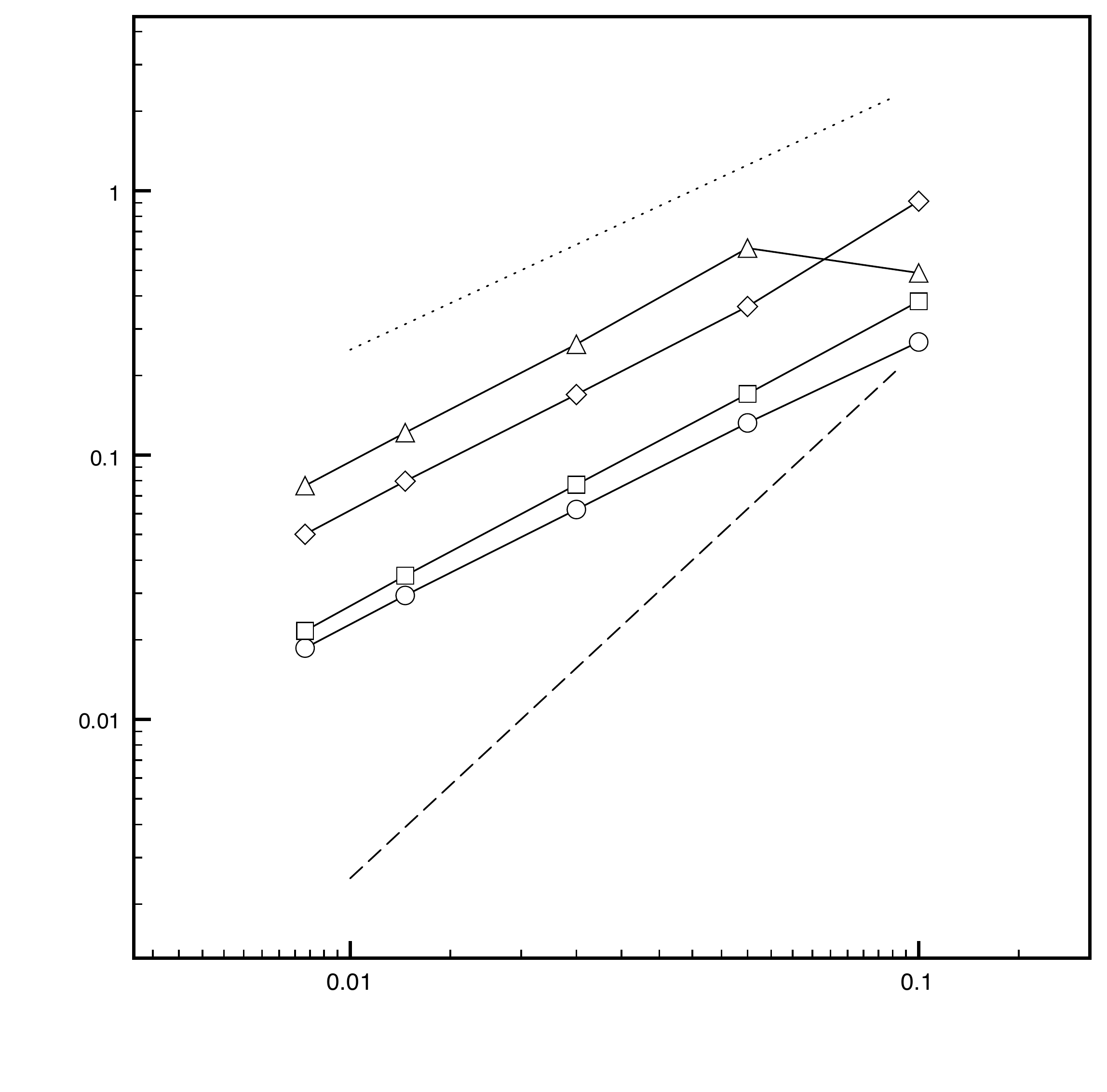}\includegraphics[height=6cm]{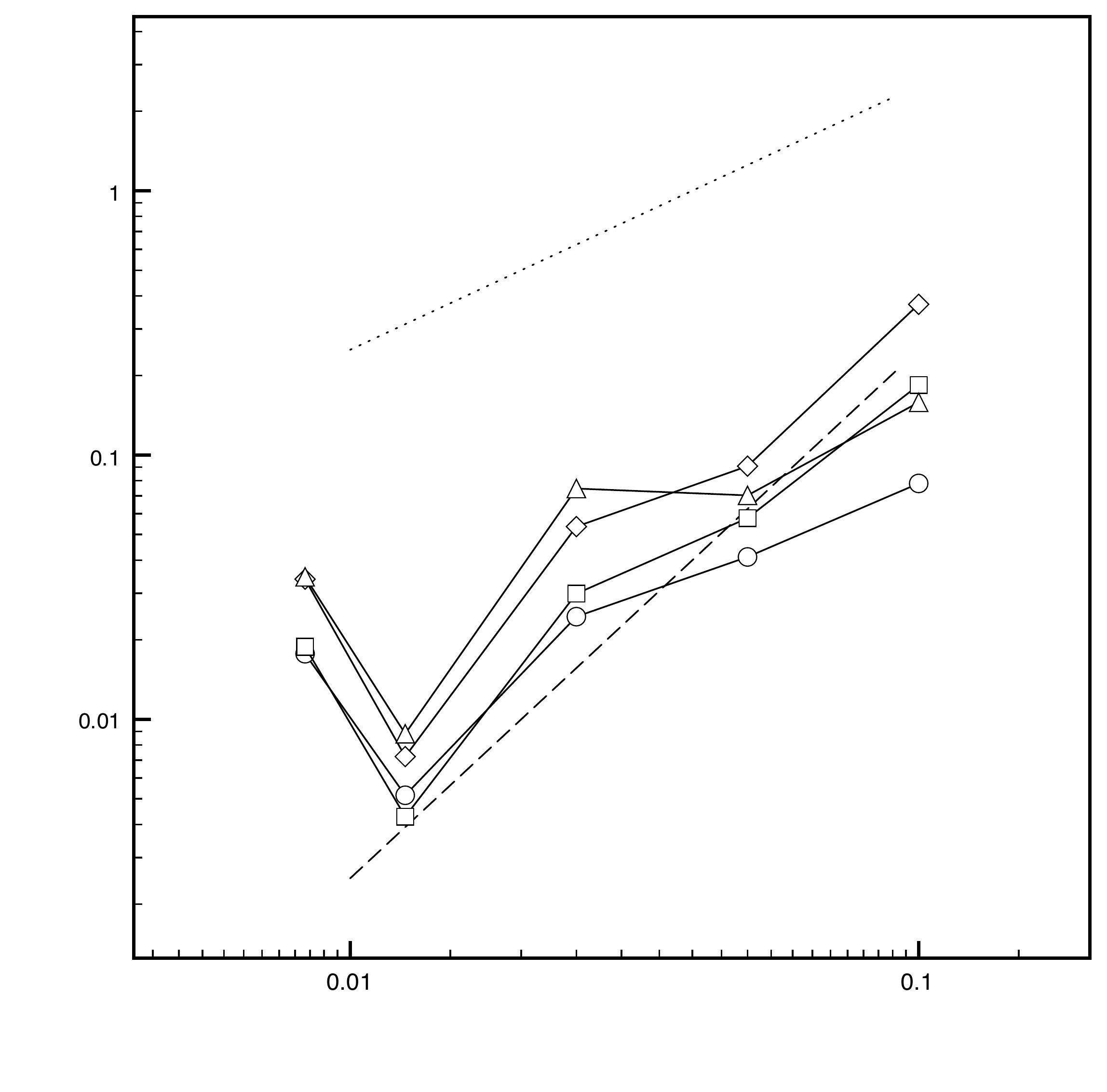}
\caption{Relative $L^2$-error  against mesh-size, $n=5$ in
  \eqref{Cauchy_data} and $E=41.6$, using the stabilizations
  \eqref{eq:weak_dual_stab_bc} (left) and \eqref{eq:num_stab1}
  (right).
 Reference curves: $y = 25 x$ (dotted) and $y= 25 x^2$ (dashed)}\label{fig:n=5}
\end{figure}
\begin{figure}
\includegraphics[height=6cm]{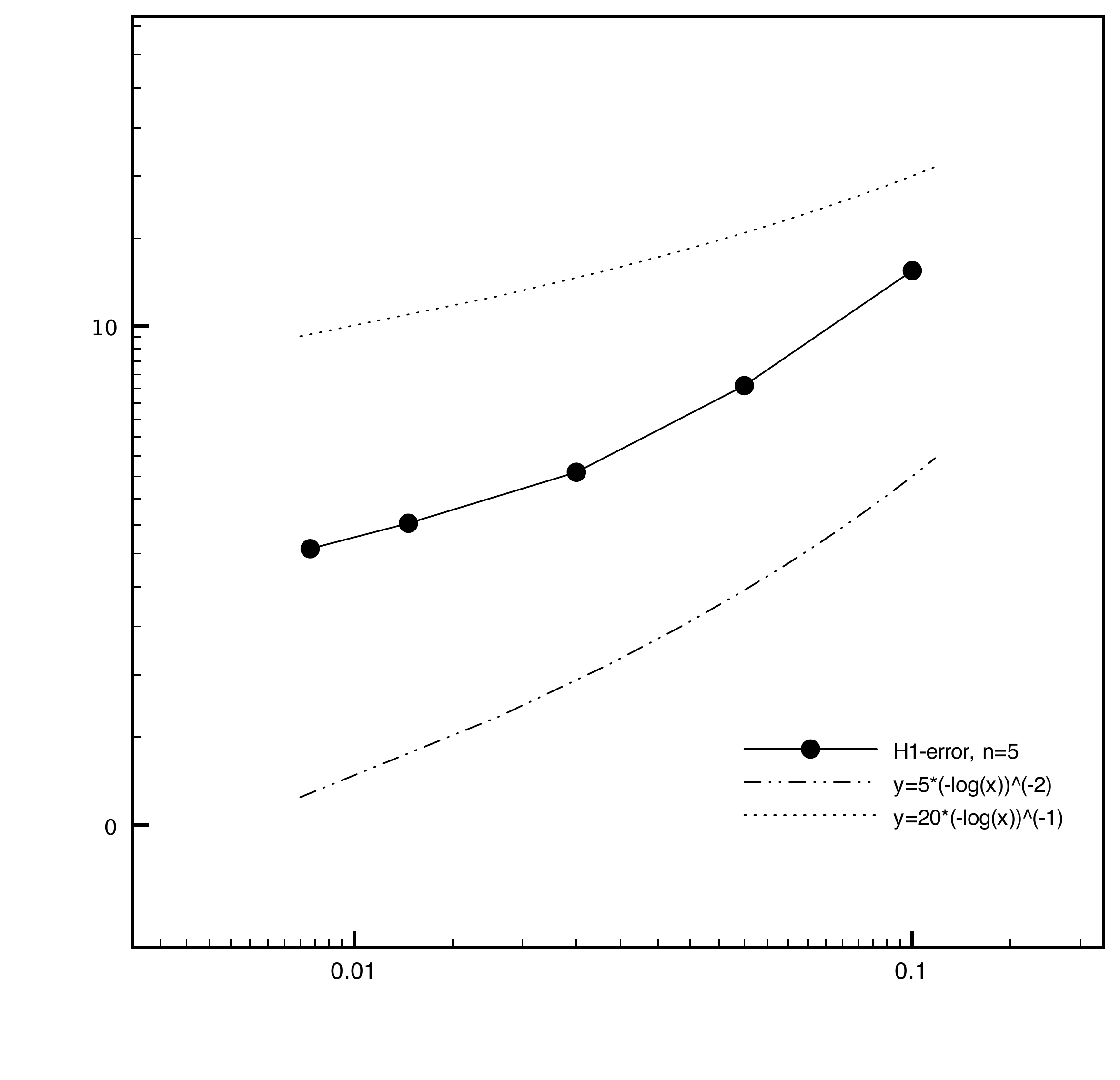}\includegraphics[height=6cm]{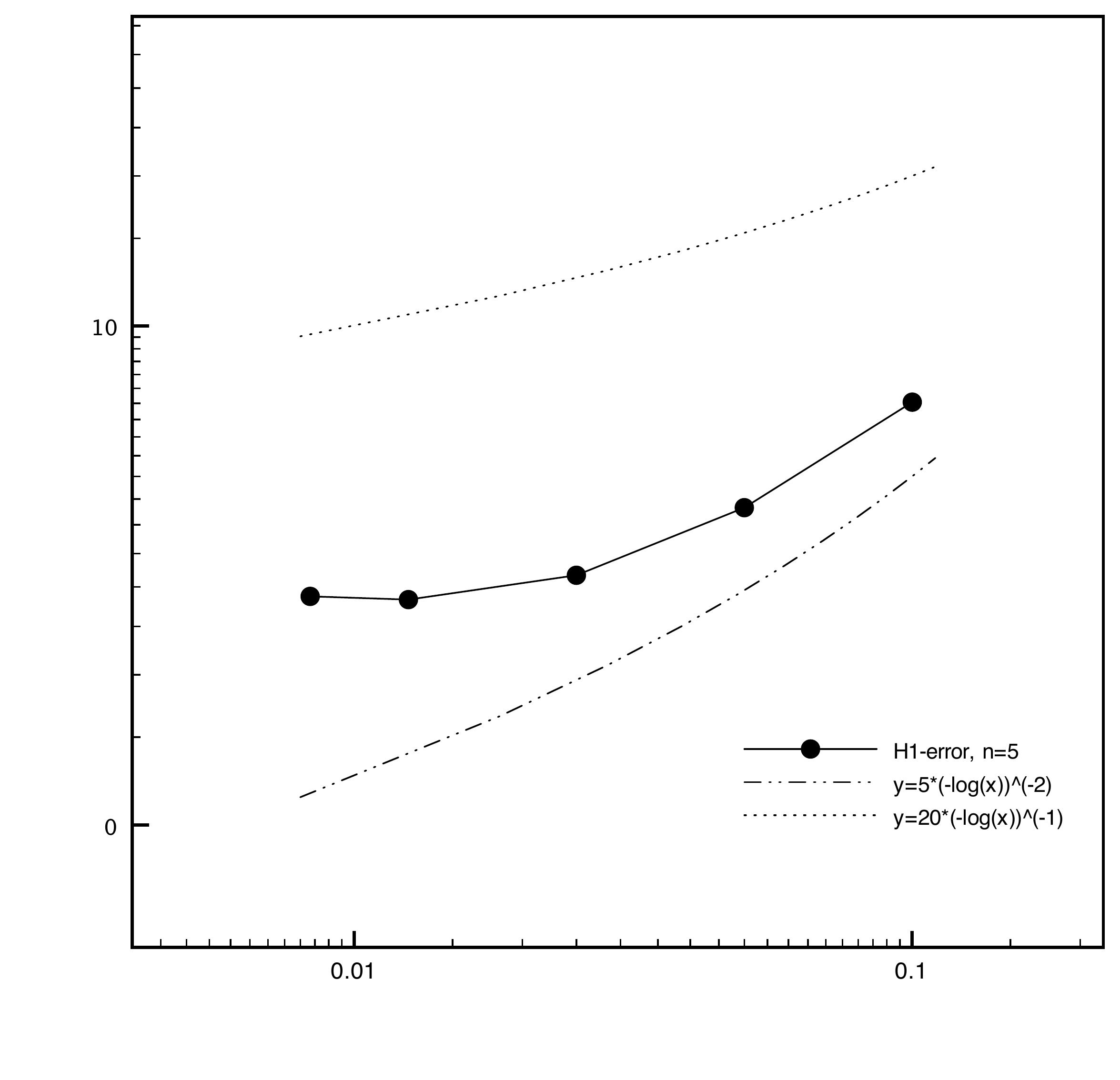}
\caption{Absolute $H^1$-error  against mesh-size, $n=5$ in
  \eqref{Cauchy_data} and $E=41.6$, using the stabilizations
  \eqref{eq:weak_dual_stab_bc} (left) and \eqref{eq:num_stab1}
  (right). Reference curves: $y = -20 log(x)^{-1}$ (dotted) and $y=
-5log(x)^{-2}$ (dash-dot) }\label{fig:H1n=5}
\end{figure}
\section{Concluding remarks}
We have proposed a nonconforming stabilized finite element method for
the approximation of elliptic Cauchy problems. Two different
stabilization operators were studied. The operator \eqref{eq:num_stab1} was shown
to give better control over perturbations in data, whereas \eqref{eq:weak_dual_stab_bc} is
adjoint consistent, possibly performing better for the computation of
certain linear functionals. We proved a posteriori and a priori error
estimates for both approaches under the assumption of continuous
dependence.
Numerically both methods were shown to have similar performance, but the method using
\eqref{eq:num_stab1} was sensitive to over-stabilization on high resultion
computations for high frequency solutions. This method also needed
separate tuning of the parameters $\gamma_V$ and $\gamma_W$, whereas
they could be chosen equal for the method using \eqref{eq:weak_dual_stab_bc}.
\bibliographystyle{amsplain}
\bibliography{nc_Cauchy}

\providecommand{\bysame}{\leavevmode\hbox to3em{\hrulefill}\thinspace}
\providecommand{\MR}{\relax\ifhmode\unskip\space\fi MR }
\providecommand{\MRhref}[2]{%
  \href{http://www.ams.org/mathscinet-getitem?mr=#1}{#2}
}
\providecommand{\href}[2]{#2}
\begin{thebibliography}{10}

\bibitem{ABC03}
Y.~Achdou, C.~Bernardi, and F.~Coquel, \emph{A priori and a posteriori analysis
  of finite volume discretizations of {D}arcy's equations}, Numer. Math.
  \textbf{96} (2003), no.~1, 17--42. \MR{2018789 (2005d:65179)}

\bibitem{ARRV09}
G.~Alessandrini, L.~Rondi, E.~Rosset, and S.~Vessella, \emph{The stability for
  the {C}auchy problem for elliptic equations}, Inverse Problems \textbf{25}
  (2009), no.~12, 123004, 47. \MR{2565570 (2010k:35517)}

\bibitem{ABB06}
S.~Andrieux, T.~N. Baranger, and A.~Ben~Abda, \emph{Solving {C}auchy problems
  by minimizing an energy-like functional}, Inverse Problems \textbf{22}
  (2006), no.~1, 115--133. \MR{2194187 (2007b:35084)}

\bibitem{ABF06}
M.~Aza{\"{\i}}ez, F.~Ben~Belgacem, and H.~El~Fekih, \emph{On {C}auchy's
  problem. {II}. {C}ompletion, regularization and approximation}, Inverse
  Problems \textbf{22} (2006), no.~4, 1307--1336. \MR{2249467 (2008b:35043)}

\bibitem{Belg07}
F.~Ben~Belgacem, \emph{Why is the {C}auchy problem severely ill-posed?},
  Inverse Problems \textbf{23} (2007), no.~2, 823--836. \MR{2309677
  (2008c:35331)}

\bibitem{Bour05}
L.~Bourgeois, \emph{A mixed formulation of quasi-reversibility to solve the
  {C}auchy problem for {L}aplace's equation}, Inverse Problems \textbf{21}
  (2005), no.~3, 1087--1104. \MR{2146823 (2006b:35334)}

\bibitem{Bour06}
\bysame, \emph{Convergence rates for the quasi-reversibility method to solve
  the {C}auchy problem for {L}aplace's equation}, Inverse Problems \textbf{22}
  (2006), no.~2, 413--430. \MR{2216406 (2007d:35277)}

\bibitem{Bu13}
E.~Burman, \emph{Stabilized finite element methods for nonsymmetric,
  noncoercive, and ill-posed problems. {P}art {I}: {E}lliptic equations}, SIAM
  J. Sci. Comput. \textbf{35} (2013), no.~6, A2752--A2780. \MR{3134434}

\bibitem{Bu14}
\bysame, \emph{Error estimates for stabilized finite element methods applied to
  ill-posed problems}, Tech. report, arXiv, 2014.

\bibitem{BH05}
E.~Burman and P.~Hansbo, \emph{Stabilized {C}rouzeix-{R}aviart element for the
  {D}arcy-{S}tokes problem}, Numer. Methods Partial Differential Equations
  \textbf{21} (2005), no.~5, 986--997. \MR{2154230 (2006i:65190)}

\bibitem{CN06}
A.~Chakib and A.~Nachaoui, \emph{Convergence analysis for finite element
  approximation to an inverse {C}auchy problem}, Inverse Problems \textbf{22}
  (2006), no.~4, 1191--1206. \MR{2249460 (2007h:49040)}

\bibitem{CR73}
M.~Crouzeix and P.-A. Raviart, \emph{Conforming and nonconforming finite
  element methods for solving the stationary {S}tokes equations. {I}}, Rev.
  Fran\c caise Automat. Informat. Recherche Op\'erationnelle S\'er. Rouge
  \textbf{7} (1973), no.~R-3, 33--75. \MR{0343661 (49 \#8401)}

\bibitem{DHH13}
J.~Dard{\'e}, A.~Hannukainen, and N.~Hyv{\"o}nen, \emph{An {$H\sb
  {\sf{div}}$}-based mixed quasi-reversibility method for solving elliptic
  {C}auchy problems}, SIAM J. Numer. Anal. \textbf{51} (2013), no.~4,
  2123--2148. \MR{3079321}

\bibitem{EGH02}
R.~Eymard, T.~Gallou{\"e}t, and R.~Herbin, \emph{Error estimate for approximate
  solutions of a nonlinear convection-diffusion problem}, Adv. Differential
  Equations \textbf{7} (2002), no.~4, 419--440. \MR{1869118 (2002h:35156)}

\bibitem{FM86}
R.~S. Falk and P.~B. Monk, \emph{Logarithmic convexity for discrete harmonic
  functions and the approximation of the {C}auchy problem for {P}oisson's
  equation}, Math. Comp. \textbf{47} (1986), no.~175, 135--149. \MR{842126
  (87j:65109)}

\bibitem{Hada02}
J.~Hadamard, \emph{Sur les probl{\`e}mes aux deriv{\'e}es partielles et leur
  signification physique.}, Bull. Univ. Princeton (1902).

\bibitem{HLT11}
H.~Han, L.~Ling, and T.~Takeuchi, \emph{An energy regularization for {C}auchy
  problems of {L}aplace equation in annulus domain}, Commun. Comput. Phys.
  \textbf{9} (2011), no.~4, 878--896. \MR{2734356}

\bibitem{HHKC07}
W.~Han, J.~Huang, K.~Kazmi, and Y.~Chen, \emph{A numerical method for a
  {C}auchy problem for elliptic partial differential equations}, Inverse
  Problems \textbf{23} (2007), no.~6, 2401--2415. \MR{2441010 (2009f:65138)}

\bibitem{HL02}
P.~Hansbo and M.~G. Larson, \emph{Discontinuous {G}alerkin methods for
  incompressible and nearly incompressible elasticity by {N}itsche's method},
  Comput. Methods Appl. Mech. Engrg. \textbf{191} (2002), no.~17-18,
  1895--1908. \MR{1886000 (2003j:74057)}

\bibitem{HL03}
\bysame, \emph{Discontinuous {G}alerkin and the {C}rouzeix-{R}aviart element:
  application to elasticity}, M2AN Math. Model. Numer. Anal. \textbf{37}
  (2003), no.~1, 63--72. \MR{1972650 (2004b:65184)}

\bibitem{He12}
F.~Hecht, \emph{New development in freefem++}, J. Numer. Math. \textbf{20}
  (2012), no.~3-4, 251--265. \MR{3043640}

\bibitem{KP03}
O.~A. Karakashian and F.~Pascal, \emph{A posteriori error estimates for a
  discontinuous {G}alerkin approximation of second-order elliptic problems},
  SIAM J. Numer. Anal. \textbf{41} (2003), no.~6, 2374--2399 (electronic).
  \MR{2034620 (2005d:65192)}

\bibitem{LL69}
R.~Latt{\`e}s and J.-L. Lions, \emph{The method of quasi-reversibility.
  {A}pplications to partial differential equations}, Translated from the French
  edition and edited by Richard Bellman. Modern Analytic and Computational
  Methods in Science and Mathematics, No. 18, American Elsevier Publishing Co.,
  Inc., New York, 1969. \MR{0243746 (39 \#5067)}

\bibitem{Lu95}
W.~Lucht, \emph{A finite element method for an ill-posed problem}, Appl. Numer.
  Math. \textbf{18} (1995), no.~1-3, 253--266, Seventh Conference on the
  Numerical Treatment of Differential Equations (Halle, 1994). \MR{1357921
  (96f:65154)}

\bibitem{RHD99}
H.-J. Reinhardt, H.~Han, and Dinh~Nho H{\`a}o, \emph{Stability and
  regularization of a discrete approximation to the {C}auchy problem for
  {L}aplace's equation}, SIAM J. Numer. Anal. \textbf{36} (1999), no.~3,
  890--905. \MR{1681021 (2000a:65166)}

\bibitem{TA77}
A.~N. Tikhonov and V.~Y. Arsenin, \emph{Solutions of ill-posed problems}, V. H.
  Winston \& Sons, Washington, D.C.: John Wiley \& Sons, New York-Toronto,
  Ont.-London, 1977, Translated from the Russian, Preface by translation editor
  Fritz John, Scripta Series in Mathematics. \MR{0455365 (56 \#13604)}

\end{thebibliography}


\end{document}